\documentclass[a4paper,11pt]{amsart}
\usepackage[T1]{fontenc}
\usepackage[english]{babel}
\usepackage[cp1252]{inputenc}
\usepackage{amsthm}
\usepackage{amsmath}
\usepackage{amsfonts}
\usepackage{tikz-cd}
\usepackage{pgfplots}
 \usepackage{microtype}
\usepackage[margin=0.8in]{geometry}

\usepackage{xcolor}
 \pgfplotsset{compat=1.18}
\usepgfplotslibrary{fillbetween}

\usepackage{amssymb}
\usepackage{hyperref}
\usepackage{color}
\usepackage{caption}
\usepackage{indentfirst}
\usepackage{amssymb}
\usepackage{eufrak}
\usepackage{mathrsfs}
\usepackage{xypic}

 \usepackage{booktabs,array,graphicx,longtable}
\usepackage{array,booktabs,tabularx}

\usepackage{booktabs}
\usepackage{listings}
\pgfplotsset{compat=1.18}
\definecolor{amoebablue}{RGB}{8,43,115}
\definecolor{contourred}{RGB}{122,0,25}

\theoremstyle{plain}   

\usepackage{booktabs}
\usepackage{graphicx}
\usepackage{array}

\definecolor{amoebadarkblue}{RGB}{0,63,127}
\definecolor{contourred}{RGB}{224,0,0}

\newif\ifhighdensityplot
\highdensityplotfalse

\definecolor{ContourColor}{RGB}{100,0,10}

\newcommand{\R}{\mathbb R}
\newcommand{\C}{\mathbb C}

\newcommand{\Log}{\mathrm{Log}\,}
\newcommand{\A}{\mathscr A}

\newcommand{\Cont}{\mathcal C}

\newcommand{\Crit}{\operatorname{Crit}}

\newcommand{\Newt}{\operatorname{Newt}}

\usepackage{array}
\usepackage{tabularx}

 \usepackage{caption}
 
\newcommand\restr[2]{{
  \left.\kern-\nulldelimiterspace 
  #1 
  \right|_{#2} 
  }}

\newtheorem*{mtheorem*}{Main Theorem}
\newtheorem{theorem}{Theorem}[section]
\newtheorem{definition}{Definition}[section]
\newtheorem{proposition}{Proposition}[section]
\newtheorem{corollary}{Corollary}[section]

\begin{document}
\title{Beyond Amoeba Boundaries} 

\author{Mounir Nisse}
\date{}

\address{Mounir Nisse\\
Department of Mathematics, Xiamen University Malaysia, Jalan Sunsuria, Bandar Sunsuria, 43900, Sepang, Selangor, Malaysia.
}
 
\email{\href{mailto:mounir.nisse@gmail.com}{mounir.nisse@gmail.com}, {mounir.nisse@xmu.edu.my}}
\urladdr{www.math.tamu.edu/\~{}nisse}
 
\thanks{This research is supported in part by Xiamen University Malaysia Research Fund (Grant no. XMUMRF/ 2024-C5/IMAT/0013).}

\subjclass[2020]{14T20, 14H50, 14M25, 14P05}
 
\keywords{Amoeba, amoeba contour, logarithmic critical locus, simple Harnack curve, toric curve, contour--boundary equality, monomial covering}

\maketitle

\begin{abstract}
We construct smooth plane curves whose logarithmic contours are smooth and embedded but strictly larger than their amoeba boundaries.  The examples include connected irreducible curves, disconnected unions of nonlinear components with two-dimensional amoebas, nested families with arbitrarily many components, and a uniform construction realizing every positive genus.  Exact critical-locus calculations distinguish the boundary-producing folds from compact or noncompact contour branches lying in the amoeba interior.  These constructions show that smoothness of the complete contour does not force contour--boundary equality and that internal critical-value branches persist under monomial transformations, controlled perturbations, and finite monomial coverings.
\end{abstract}

\section{Introduction}

The amoeba of an algebraic curve records the absolute values of its complex points while discarding their arguments.  More precisely, if $C\subset(\C^*)^2$ is the zero set of a Laurent polynomial, its amoeba is the closed subset $\A_C=\Log(C)\subset\R^2$, where $\Log(z,w)=(\log|z|,\log|w|)$.  Since their introduction by Gelfand, Kapranov, and Zelevinsky, amoebas have become a meeting point of complex algebraic geometry, real algebraic geometry, convex analysis, and tropical geometry \cite{GKZ,Mikhalkin}.  Their unbounded tentacles reflect the faces of the Newton polygon, their complement components are convex and carry the order map, and their area is governed by the Monge--Amp\`ere geometry of the Ronkin function \cite{FPT,PR,Rullgard}.  In the special extremal situation of maximal amoeba area, the geometry is tied to simple Harnack curves and to the rigid two-sheeted behavior of the logarithmic map \cite{Mikhalkin2000,MR2001,KOS}.

The present paper concerns a different and more local object: the complete logarithmic contour.  For a smooth curve $C=V(f)$, the logarithmic Gauss map is $\gamma_f:C\longrightarrow\mathbb P^1(\C)$, defined by $\gamma_f(z,w)=[zf_z(z,w):wf_w(z,w)]$.  The logarithmic critical locus is
$$
S_f=\Crit(\Log|_C)=\gamma_f^{-1}(\mathbb P^1(\R)),
$$
and the complete contour is its image $\Cont(A_C)=\Log(S_f)$.  This characterization, originating in the study of real toric curves and logarithmic Gauss maps, converts a differential criticality condition into an exact algebraic reality condition \cite{Mikhalkin2000,MadaniNisse}.  It also makes the contour accessible to elimination, real-root isolation, and direct parametrization.  The real degree of such contours and their intersections with affine lines were studied systematically by Lang, Shapiro, and Shustin, who emphasized that the contour, the amoeba boundary, and the logarithmic image of the real locus are distinct objects and may have substantially different geometry \cite{LSS2021}.

Although the inclusion $\partial\A_C\subseteq\Cont(\A_C)$ is automatic, equality need not hold.  The reason is already visible in the local normal form of a fold.  Near a nondegenerate logarithmic critical point, suitable real-analytic coordinates reduce the map to $(s,t)\mapsto(s,t^2)$.  The critical-value arc bounds the image of that particular local sheet, but it belongs to the boundary of the entire amoeba only when no other sheet covers the side locally omitted by the fold.  If another part of the complex curve maps regularly across the same target neighborhood, the fold remains in the complete contour while lying strictly inside the amoeba.  The distinction between a boundary-producing fold and an internal fold is therefore global in the source, even though criticality itself is detected locally.

This elementary observation leads to a natural existence problem.  How flexible can an amoeba contour be when it is required to remain smooth and embedded?  Can an internal critical-value branch occur without producing cusps, nodes, or self-tangencies?  Can this happen for a connected irreducible curve rather than only for a reducible union?  Can every algebraic component be nonlinear and have a two-dimensional amoeba?  Finally, can the phenomenon occur in arbitrarily high genus while retaining exact control of the logarithmic critical locus and its image?  The constructions developed here give affirmative answers to all of these questions.

The first family isolates the mechanism in its simplest exact form.  A rational graph has vertical logarithmic slices bounded by two explicit contour graphs, while an additional disjoint component maps to a central curve lying strictly between them.  Because the added contour is already covered on both sides by the amoeba of the graph component, it is internal.  Cyclotomic modifications replace the elementary graph by families controlled by $z^n-a^n$ and $z^n+a^n$, producing explicit half-width functions and allowing arbitrary monomial slopes.  The monomial component can then be replaced by a nonlinear hyperbola, so that no algebraic component is a line.  These examples establish the boundary--contour distinction by exact slice formulas rather than by a numerical picture.

The same idea admits a stronger nested form.  Two irreducible nonlinear graph components may be chosen so that both amoebas are two-dimensional and the amoeba of one component lies entirely in the interior of the other.  The contour of the inner component is then internal to the union amoeba, while the contour of the outer component produces the boundary.  A different construction uses the nonlinear level curves
$z+z^{-1}+w+w^{-1}=c$ with $c>4$.  Several distinct levels are pairwise disjoint in the algebraic torus, and their amoebas and contours are described by the exact inequalities involving $2\cosh x$ and $2\cosh y$.  Under an explicit separation condition, the complete union contour remains a smooth embedded real-analytic one-manifold.  This gives examples with arbitrarily many connected components and a parallelogram Newton polygon, while every component is nonlinear and has a two-dimensional amoeba.

Disconnected unions make the covering mechanism transparent, but they do not exhaust the phenomenon.  A central part of the paper is the connected irreducible translated conic
$$
(z-R)^2+(w-R)^2=1,
$$
with $R$ sufficiently large.  Its positive real circle maps to a compact logarithmic oval.  That oval is critical, yet it lies in the interior of the amoeba because a nonreal regular sheet covers a neighborhood on both sides.  The remaining critical components produce the remote boundary folds.  The proof combines the rational parametrization of the conic, the logarithmic Gauss criterion, quantitative control of a nonreal sheet, and the exclusion of ramification and off-diagonal critical-value coincidences.  Thus the compact internal oval is not the boundary of a bounded complement component; it is an additional component of the complete contour contained in the amoeba interior.

The circle is followed by two kinds of deformation.  A small anisotropic perturbation gives a connected noncircular conic for which the internal oval and the remote boundary components persist.  Unimodular monomial transformations then send these curves to algebraically different equations while acting on logarithmic space by invertible integral linear maps.  Such transformations preserve criticality, embeddedness, and the boundary--interior distinction.  In particular, they produce irreducible quartic equations whose Newton polygons and Euclidean shapes no longer resemble the original circle, even though the logarithmic geometry is transported exactly.

The final construction replaces the unimodular transformation by a finite monomial covering.  For every integer $n\geq2$, consider
$$
F_n(z,w)=(z^n-10)^2+(w-10)^2-1.
$$
The map $(z,w)\mapsto(z^n,w)$ is an unramified covering of the algebraic torus and intertwines the logarithmic maps by the linear transformation $(X,Y)\mapsto(nX,Y)$.  Consequently, the entire critical locus and the complete contour are pulled back from the translated conic, while their logarithmic images are transported by an invertible linear map.  The Newton polygon of $F_n$ is $\operatorname{conv}\{(0,0),(2n,0),(0,2)\}$, and the toric genus formula gives genus $n-1$ from its interior lattice points \cite{Khovanskii1978}.  Smoothness, irreducibility, Newton nondegeneracy, and connectedness are proved directly.  The construction therefore realizes every prescribed genus $g\geq1$ by taking $n=g+1$, and in every case the complete contour is smooth and embedded and contains a compact oval lying strictly inside the amoeba.

This covering argument also clarifies an important multiplicity issue.  Each critical point of the conic has several lifts, but points in one deck orbit have the same logarithmic image germ.  They must not be counted as distinct contour branches in an off-diagonal two-lift calculation.  After the deck graphs are removed, an off-deck coincidence upstairs would descend to a forbidden coincidence for the conic.  Thus the construction simultaneously controls ramification, image embeddedness, and toric-end separation.  The argument illustrates the broader principle that the topology of the critical source, the number of logarithmic lifts, and the geometry of the contour image must be kept separate.

The principal conclusion may be stated succinctly.  Smoothness and embeddedness of the complete contour do not imply contour--boundary equality.  The failure can be caused by compact internal ovals, internal noncompact graphs, or complete contours of nested components.  It occurs for connected and disconnected curves, for nonlinear components with two-dimensional amoebas, for Newton polygons of several combinatorial types, and in every positive genus.  These examples complement the degree-theoretic viewpoint of Lang, Shapiro, and Shustin \cite{LSS2021}: bounding how complicated a contour can be is only one side of the problem, while determining which smooth contour branches actually contribute to the amoeba boundary requires a global analysis of logarithmic sheets.

The paper is organized around increasingly strong versions of this principle.  The initial sections establish explicit disconnected graph, cyclotomic, hyperbolic, and nested constructions.  The next part develops families with arbitrarily many nonlinear components and analyzes their Newton polygons.  The connected part treats the translated circle, concentric conic unions, a noncircular perturbation, and unimodular monomial transforms.  The final sections give an irreducible transformed quartic and the finite-covering family $F_n$, culminating in the uniform existence theorem for every positive genus.  Throughout the paper, the blue regions in the figures represent the amoebas and the red curves represent the complete contours; the proofs, however, rely on exact parametrizations, critical equations, slice inequalities, and covering identities rather than on the drawings.
\medskip

\noindent{\it Acknowledgements.} The author is deeply grateful to Boris Shapiro for kindly sharing the joint paper with Lionel Lang and Eugenii Shustin \cite{LSS2021}; the present work was inspired by the challenging question raised in that paper.
Also, I am gratefully acknowledges the use of Codex AI, based on GPT-6, which used 
 Python, NumPy, SciPy, Matplotlib, Pillow, and LaTeX with TikZ and PGFPlots for the exact computations, numerical experiments, and figures presented in this work.  


\section{Definitions and the boundary--contour distinction}

Let $f\in\C[z^{\pm1},w^{\pm1}]$ and assume that $C=V(f)\subset(\C^*)^2$ is smooth.  The logarithmic Gauss map is $\gamma_f:C\to\mathbb{CP}^1$, defined by $\gamma_f(z,w)=[zf_z(z,w):wf_w(z,w)]$.  The standard criticality criterion is
$$
\Crit(\Log|_C)=\gamma_f^{-1}(\mathbb{RP}^1).
$$
This criterion is used repeatedly below.  It turns criticality into an exact reality condition and is the natural bridge between real algebraic geometry and amoeba contours \cite{Mikhalkin2000,LSS2021}.

If $C$ is disconnected, smoothness means smoothness as an algebraic submanifold of the torus.  Thus its irreducible components must be individually smooth and pairwise disjoint in $(\C^*)^2$.  For a factorization $f=f_1\cdots f_N$ with pairwise disjoint zero sets, one has $df=(\prod_{k\ne j}f_k)df_j$ along $V(f_j)$, so the union is smooth exactly when every component is smooth.

\begin{definition}\label{def:strict}
A smooth plane curve has the strict boundary--contour property when its complete contour is a smooth embedded real-analytic one-manifold and $\partial\A_C\subsetneq\Cont_C$.
\end{definition}

At an ordinary fold there are real-analytic source and target coordinates in which the logarithmic map has the local form $(s,t)\mapsto(s,t^2)$.  The critical-value arc bounds the image of that local sheet.  It is a boundary arc of the entire amoeba only when no other logarithmic sheet covers the locally missing side.  This local observation explains why smoothness of the contour alone cannot imply contour--boundary equality.

\subsection{A unified existence statement}

\begin{theorem}\label{thm:collected}%
There exist smooth connected irreducible plane curves and smooth disconnected plane curves having the strict boundary--contour property.  The connected examples may be chosen among the translated circles $C_R$, the noncircular conics $C_{R,\delta}$, and their unimodular monomial transforms.  The disconnected examples may have two or arbitrarily many connected components.  They may be chosen so that no component is a line and so that every component has a two-dimensional amoeba.
\end{theorem}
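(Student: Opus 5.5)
Theorem~\ref{thm:collected} collects the constructions announced in the introduction, so I will prove it by exhibiting one family for each clause and checking, for each, the three requirements of Definition~\ref{def:strict}: smoothness of $C$, that $\Cont_C$ is a smooth embedded real-analytic one-manifold, and that some contour branch lies in the interior of $\A_C$.

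\emph{Disconnected examples.} I would take $N\ge2$ distinct levels $c_1<\dots<c_N$, all $>4$, of $g_c=z+z^{-1}+w+w^{-1}-c$, and set $f=\prod_j g_{c_j}$. Distinct levels have disjoint zero sets in $(\C^*)^2$. Each $V(g_c)$ is smooth: its critical points force $z=\pm1$ and $w=\pm1$, which give $c\in\{0,\pm4\}$. Each component is nonlinear. Its amoeba is $\{2\cosh x+2\cosh y\ge c\}$ with $|z+z^{-1}|\le 2\cosh x$ and equality only on the real locus, so it is two-dimensional.

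The logarithmic Gauss criterion gives $\gamma=[z-z^{-1}:w-w^{-1}]$. I would show that the critical locus of each level consists of its real points together with the points where both arguments lie in $\{0,\pi\}$. Hence each component contributes the smooth curves $\pm2\cosh x\pm2\cosh y=c$ that meet the amoeba, namely $2\cosh x+2\cosh y=c$ and $|2\cosh x-2\cosh y|=c$. The contours of different levels are disjoint level sets of the same functions, so under the separation condition the union contour is smooth and embedded. The branches $|2\cosh x-2\cosh y|=c_j$ for $j<N$ lie strictly inside the amoeba of $g_{c_N}$. This gives examples with two or arbitrarily many components, none of them a line, each with a two-dimensional amoeba. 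The nested graph construction would serve as a second witness.

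\emph{Connected examples.} Let $C_R$ be the circle $(z-R)^2+(w-R)^2=1$ with $R\gg1$. It is smooth and irreducible as a nondegenerate conic, and connected because it is irreducible. I would parametrize it rationally and write the reality condition $\operatorname{Im}(z f_z\,\overline{w f_w})=0$. The critical locus then splits into three parts: the real circle, whose image is a compact oval near $(\log R,\log R)$, and the two other real arcs, together with the remaining solution branches, which map to the remote folds along the tentacles.

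To prove the oval is internal, I would exhibit for each point of the oval a regular nonreal sheet covering a neighbourhood on both sides. Concretely, set $z=R+\cos\theta$ and $w=R+\sin\theta$ with $\theta$ complex and of small imaginary part. Then $|z|$ and $|w|$ vary in an open set containing the oval, and the sheet stays away from $\gamma^{-1}(\mathbb{RP}^1)$. Embeddedness needs three checks:
\begin{itemize}
\item no fold degenerates to a cusp, i.e.\ $\gamma$ is unramified along the critical locus;
\item distinct critical points have no off-diagonal coincidence of images, handled by separating the scales $\log R$ from the remote branches;
\item the ends are separated along the tentacles.
\end{itemize}

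For $C_{R,\delta}$ I would use stability: all these conditions are open and uniform on compact pieces, with explicit asymptotics controlling the ends, so they persist for small $\delta$. A unimodular monomial map $\phi_A$ is an automorphism of $(\C^*)^2$ and satisfies $\Log\circ\phi_A=A\circ\Log$. It therefore preserves smoothness, irreducibility, connectedness, the critical locus, embeddedness and interiority.

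\emph{Main obstacle.} I expect the hardest step to be the global embeddedness for $C_R$, in particular excluding off-diagonal coincidences between the oval and the remote branches and ruling out ramification of $\gamma$ on the nonreal critical components. I would first try quantitative bounds in $R$: every critical point with $|z|$ or $|w|$ far from $R$ has logarithmic image at distance about $\log R$ from the oval. The remaining local coincidences I would exclude by the explicit discriminant of the parametrized reality equation.
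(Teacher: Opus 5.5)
\textbf{Gap: the interiority of the oval $\Gamma_R$ is not established.} Your disconnected examples are essentially the paper's Theorem~\ref{thm:levels}, and they go through after the corrections in the next paragraph. The connected half, however, has a genuine gap at the one step that carries the phenomenon, namely showing that $\Gamma_R$ lies in the interior of the amoeba. The sheet you propose, $z=R+\cos\theta$, $w=R+\sin\theta$ with $\operatorname{Im}\theta$ small, is the local sheet of the fold itself. Its points with $\operatorname{Im}\theta=0$ form the critical real circle, so it does not stay away from $\gamma^{-1}(\mathbb{RP}^1)$, and its regular points all map to one side of $\Gamma_R$. Indeed, for $\theta=t+is$ one has $|z|^2=(R+\cos t)^2+2R\cos t\,(\cosh s-1)+\sinh^2 s$ and $|w|^2=(R+\sin t)^2+2R\sin t\,(\cosh s-1)+\sinh^2 s$. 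These depend on $s$ only through $\cosh s$. To first order in $\cosh s-1$, the logarithmic image moves from $\Gamma_R(t)$ by $(\cosh s-1)(\cos t,\sin t)/R+O\bigl((\cosh s-1)/R^2\bigr)$, that is, to the exterior of the oval for both signs of $s$. This is exactly the fold picture of the introduction: a fold bounds the image of its own sheet, and interiority needs a \emph{different} regular sheet.

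The paper finds that sheet far from the real circle, via the rescaling $\zeta=R\xi$. Near $\xi=-2-2i$, i.e.\ $(z,w)\approx(-iR,iR)$ and $|\operatorname{Im}\theta|\approx\log(2\sqrt2R)$, the limiting map $(\alpha,\beta)\mapsto(A,B)$ has Jacobian $1+(\alpha+\beta)/2\approx-1$. This nonreal sheet therefore regularly covers a neighbourhood of $(\log R,\log R)$ of fixed size, which contains the $O(1/R)$-small oval. Alternatively, you could keep your local computation and add the order map. The Newton polygon $\operatorname{conv}\{(0,0),(2,0),(0,2)\}$ has no interior lattice point, so $\A_{C_R}$ has no bounded complement component. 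Since $\Gamma_R\subset\A_{C_R}$, any complement component meeting the disk bounded by $\Gamma_R$ would be bounded, so the closed disk lies in $\A_{C_R}$. Together with the exterior collar from the fold, every point of $\Gamma_R$ is then interior. As written, your $C_{R,\delta}$ and monomial-transform examples inherit this gap.

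\textbf{Smaller corrections.}
\begin{itemize}
\item \emph{Amoeba of a level curve.} It is $\{|2\cosh x-2\cosh y|\le c\le 2\cosh x+2\cosh y\}$, not $\{2\cosh x+2\cosh y\ge c\}$. With your description, the branches $|2\cosh x-2\cosh y|=c$ would already be internal for a single level, contradicting Theorem~\ref{thm:genus-one-control}.
\item \emph{Separation condition.} Your claim that $|2\cosh x-2\cosh y|=c_j$, $j<N$, is interior to $\A_{V(g_{c_N})}$ is still right. It needs $c_N-c_1<4$, because on such a branch $2\cosh x+2\cosh y\ge c_j+4>c_N$. State this condition explicitly. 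It is also what keeps $2\cosh x+2\cosh y=c_j$ from meeting $\pm(2\cosh x-2\cosh y)=c_k$.
\item \emph{Real locus of $C_R$.} There are no ``two other real arcs''. The real locus is only the circle, which lies in the positive quadrant, and all remote folds come from nonreal critical points (the factor $H_R$ in the paper).
\item \emph{Cusps versus ramification.} Absence of cusps is not the same as $\gamma$ being unramified. Write $C$ locally as $\log w=\phi(\log z)$, so that $\gamma$ corresponds to $-\phi'$. The critical set $S=\{\operatorname{Im}\phi'=0\}$ is smooth where $\phi''\neq0$. At such a point, $\Log|_S$ is an immersion only if $\phi''\notin i\R$. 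Both conditions must be checked, as the paper does with separate critical-source-gradient and contour-tangent bounds.
\end{itemize}
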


The proof occupies the remainder of the article.  Each family is established independently, so Theorem~\ref{thm:collected} is a summary rather than an additional existence argument.

\section{The elementary disconnected graph--diagonal family}

For $a>0$, put
$$
f_a(z,w)=\bigl((z+a)w-z(z-a)\bigr)(w-z)
$$
and let $C_a=V(f_a)\subset(\C^*)^2$.  Write $C_a^{(1)}$ for the graph of $w=z(z-a)/(z+a)$ and $C_a^{(2)}$ for the diagonal $w=z$.

\begin{theorem}\label{thm:elementary}
For every $a>0$, the curve $C_a=C_a^{(1)}\sqcup C_a^{(2)}$ is smooth and has exactly two connected irreducible components.  Its amoeba is the amoeba of $C_a^{(1)}$, its complete contour is smooth and embedded, and the diagonal $y=x$ is an internal contour component.  Consequently $C_a$ has the strict boundary--contour property.
\end{theorem}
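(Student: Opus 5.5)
The plan is to parametrize both components by $z$, compute everything through vertical slices $x=\log|z|=\log r$, and apply the logarithmic Gauss criterion $\Crit(\Log|_C)=\gamma_f^{-1}(\mathbb{RP}^1)$ to each component separately.

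\emph{Smoothness and components.} First I would show that the two components are disjoint in $(\C^*)^2$. Substituting $w=z$ into $(z+a)w=z(z-a)$ gives $z(z+a)=z(z-a)$, hence $2az=0$, which is impossible in the torus. The graph $C_a^{(1)}$ is the graph of the regular function $g(z)=z(z-a)/(z+a)$ on $\C\setminus\{0,a,-a\}$. It is therefore smooth, irreducible and connected, being isomorphic to a punctured line; the diagonal is likewise. By the remark in Section~2 on disjoint factorizations, $C_a$ is then smooth with exactly these two components.

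\emph{Amoeba via slices.} For $|z|=r$ the point has $y=x+\log|z-a|-\log|z+a|$. As $\arg z$ varies, the ratio $|z-a|/|z+a|$ sweeps the interval $\bigl[\,|r-a|/(r+a),\,(r+a)/|r-a|\,\bigr]$, and the extremes occur exactly at $z=\pm r$. For $r=a$ the ratio sweeps $(0,\infty)$. Hence the vertical slice of $\A_{C_a^{(1)}}$ over $x$ is $[x-h(x),x+h(x)]$, where $h(x)=\log\frac{e^x+a}{|e^x-a|}>0$, and the slice over $x=\log a$ is all of $\R$. Since $h>0$, the line $y=x$, which is $\A_{C_a^{(2)}}$, lies in the interior of $\A_{C_a^{(1)}}$. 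So the union amoeba equals $\A_{C_a^{(1)}}$, and its boundary is the pair of graphs $y=x\pm h(x)$ over $\R\setminus\{\log a\}$.

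\emph{Critical loci.} On the diagonal, $\gamma=[-z:w]=[-1:1]\in\mathbb{RP}^1$, so the whole diagonal is critical and contributes the line $y=x$ to the contour. On the graph the criterion becomes $zg'(z)/g(z)\in\R$. Now
$$
\frac{zg'(z)}{g(z)}=1+\frac{2az}{z^2-a^2},
$$
so the condition reduces to $z-a^2/z\in\R$. Writing $z=re^{i\theta}$, the imaginary part is $(r+a^2/r)\sin\theta$, which vanishes only for real $z$. Thus $S_f\cap C_a^{(1)}$ is the real locus $\R^*\setminus\{\pm a\}$. Its images are exactly the two graphs $x\mapsto x\pm h(x)$: the positive reals give $x-h(x)$ and the negative reals give $x+h(x)$, with each of the intervals $(0,a)$ and $(a,\infty)$ mapping onto one branch over the corresponding half-line.

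\emph{Embeddedness.} I then check that the complete contour, namely the two graphs together with the line $y=x$, is a smooth embedded one-manifold. Each piece is the graph of a real-analytic function, so it is smooth and embedded. The pieces are pairwise disjoint because $h>0$. The union is closed because $h(x)\to\infty$ as $x\to\log a$, so no branch accumulates on another.

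It follows that $\partial\A_C$ is the two outer graphs, while $y=x$ belongs to the contour but lies in the interior of the amoeba. This gives the strict inclusion $\partial\A_C\subsetneq\Cont_C$, i.e.\ the strict boundary--contour property.

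The main obstacle is the exact determination of the critical locus of the graph component, namely excluding nonreal critical points. The identity above settles it cleanly. A secondary point is verifying that the slice extremes occur exactly at real $z$, which follows from the elementary geometry of distances to $\pm a$ on the circle $|z|=r$.
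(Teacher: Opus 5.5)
Your proof is correct and follows the paper's route. Both arguments obtain disjointness by substituting $w=z$, compute the exact vertical slices $[x-h_a(x),x+h_a(x)]$ from $z=re^{i\theta}$, show the diagonal is interior because $h_a>0$ and $h_a$ diverges at $x=\log a$, and check that the contour curves are pairwise disjoint. At the point $(\log a,\log a)$ you should cite the divergence explicitly, since positivity of $h_a$ alone does not apply there.

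Your identity $zg'/g=1+2az/(z^2-a^2)$, which forces critical points to be real, makes explicit something the paper only asserts in this proof: that the graph component has no nonreal critical points. It is equivalent to the angular-derivative criterion $\partial_\theta\log|g|=-\operatorname{Im}(zg'/g)=0$ that the paper uses in the cyclotomic proof.
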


\begin{proof}
The value $z=-a$ cannot occur on the first factor because substitution gives $-2a^2$.  Hence $C_a^{(1)}$ is a graph over $\C^*\setminus\{a,-a\}$, and its $w$-derivative $z+a$ never vanishes along the graph.  The graph is smooth and irreducible because its defining Laurent polynomial is primitive and has degree one in $w$.  The diagonal is smooth and irreducible.  Substitution of $w=z$ into the first factor gives $2az$, which does not vanish in the torus.  The components are therefore disjoint, and the product defines a smooth disconnected curve.

Put $z=re^{i\theta}$ and $x=\log r$.  The modulus of the graph coordinate satisfies
$\log|w|=x+\log|z-a|-\log|z+a|$.  Its extreme values on $|z|=r$ occur at $\sin\theta=0$.  With
$h_a(x)=\log((e^x+a)/|e^x-a|)$, the vertical amoeba slice is
$$
\{y:(x,y)\in\A_{C_a^{(1)}}\}=[x-h_a(x),x+h_a(x)].
$$
At $x=\log a$ the interval is understood as the whole real line.  The two graphs $y=x\pm h_a(x)$ form the boundary and the complete contour of the first component.  The diagonal component has amoeba and contour $y=x$.  Since $h_a(x)>0$ for every finite $x\ne\log a$, the diagonal lies in the interior of the graph amoeba.  Near $x=\log a$ the half-width tends to infinity, so every finite point on the diagonal is interior there as well.  Adding the diagonal does not enlarge the amoeba but adds an internal contour component.  The three image curves are mutually disjoint in the finite logarithmic plane and are smooth, proving the assertion.
\end{proof}

Figure~\ref{fig:elementary} uses $a=1$.

\begin{figure}[h]
\centering
\includegraphics[width=.28\textwidth]{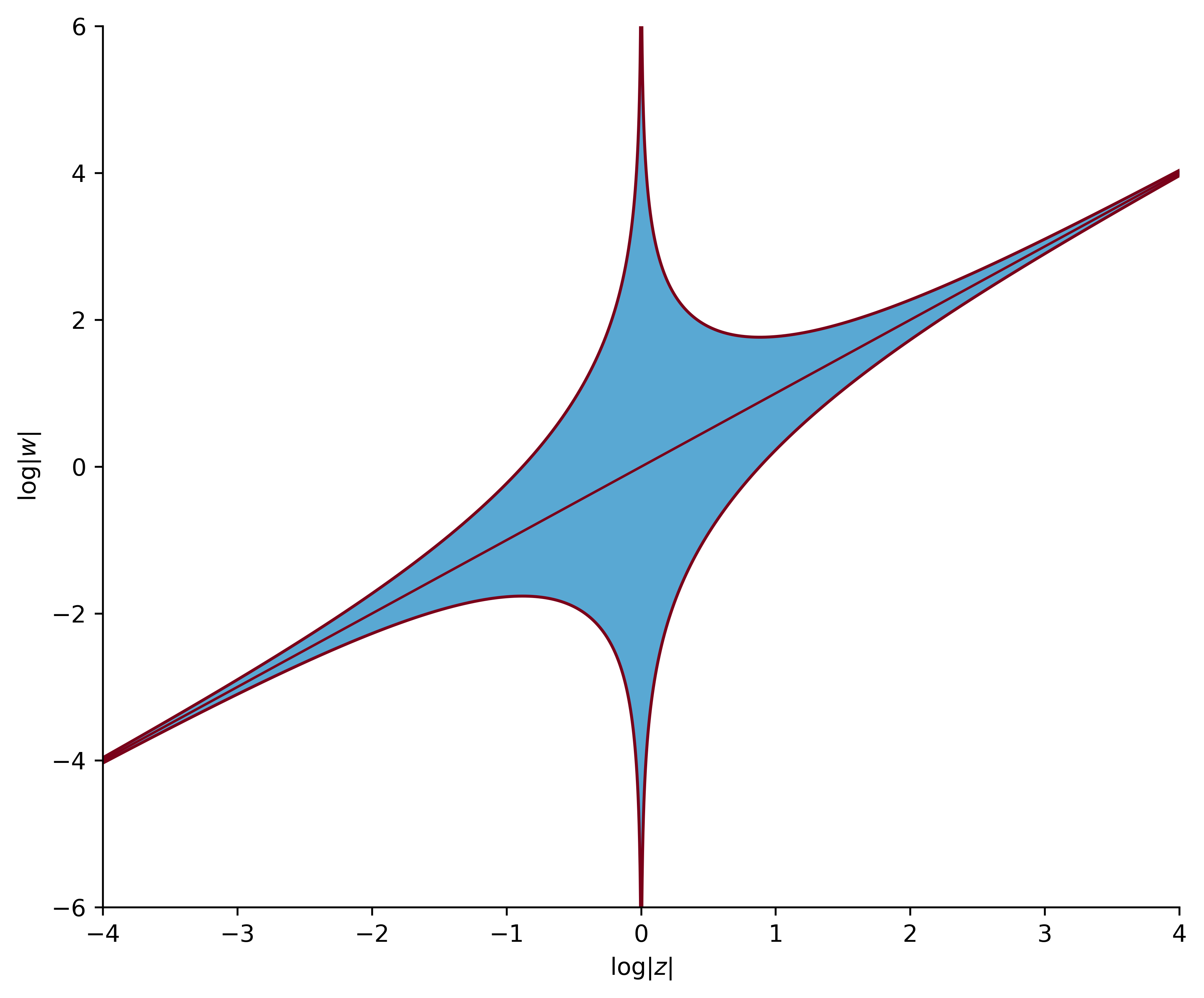}
\caption{The amoeba of $C_1$ in light blue and its complete contour in dark red.  The central red diagonal is internal.}
\label{fig:elementary}
\end{figure}

\subsection{The cyclotomic graph--monomial family}

Fix $n\ge2$, $m\ge1$, and $a>0$.  Put $A(z)=z^n+a^n$, $B(z)=z^n-a^n$, and define
$$
D_{n,m,a}=V\bigl((A(z)w-z^mB(z))(w-z^m)\bigr)\subset(\C^*)^2.
$$

\begin{theorem}\label{thm:cyclotomic}
The curve $D_{n,m,a}$ is smooth and has exactly two connected irreducible components.  Its complete contour is smooth and embedded.  The line $y=mx$ is an internal contour component, while the two graphs $y=mx\pm h_{n,a}(x)$ form the amoeba boundary, where
$h_{n,a}(x)=\log((e^{nx}+a^n)/|e^{nx}-a^n|)$.  Thus $D_{n,m,a}$ has the strict boundary--contour property.
\end{theorem}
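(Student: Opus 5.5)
The proof follows the template of Theorem~\ref{thm:elementary}, with the variable $u=z^n$ doing the work that $z$ did there. Put $b=a^n$, so that $A=u+b$ and $B=u-b$.

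\emph{Smoothness and components.} The roots of $A$ are $a\zeta$ with $\zeta^n=-1$, and the roots of $B$ are $a\zeta$ with $\zeta^n=1$. These sets are disjoint and do not contain $0$. Hence $A$ and $z^mB$ are coprime, and the Laurent polynomial $A(z)w-z^mB(z)$ is primitive of degree one in $w$, so it is irreducible. At a root of $A$ the first factor reduces to $-z^mB(z)=2bz^m\neq0$. Therefore the first component $D^{(1)}$ is the graph of $g(z)=z^mB(z)/A(z)$ over $\C^*\setminus\{A=0\}$, and its $w$-derivative $A(z)$ never vanishes on it, which gives smoothness. Substituting $w=z^m$ into the first factor gives $z^m(A-B)=2bz^m\neq0$ in the torus. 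So the two components are disjoint, and the union is smooth, as in Section~2.

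\emph{Amoeba slices.} Write $|z|=e^x$, so $|u|=e^{nx}$ and $\arg u$ runs over the whole circle as $\arg z$ does. Then
$$
\log|w|=mx+\log|u-b|-\log|u+b|.
$$
The ratio $|u-b|/|u+b|$ on the circle $|u|=e^{nx}$ is continuous and attains its extreme values at $u=\pm e^{nx}$. Hence the vertical slice of $\A_{D^{(1)}}$ over $x$ is the full interval $[mx-h_{n,a}(x),\,mx+h_{n,a}(x)]$. At $x=\log a$ this interval is the whole line, since $u=b$ gives $w\to0$ and $u=-b$ gives $w\to\infty$. The line $y=mx$ therefore lies in the interior of $\A_{D^{(1)}}$, because $h_{n,a}(x)>0$ for every finite $x$ and $h_{n,a}\to\infty$ near $x=\log a$. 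The two graphs $y=mx\pm h_{n,a}(x)$ are the boundary of the union amoeba, which coincides with $\A_{D^{(1)}}$.

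\emph{Critical locus.} This is the step I expect to be the main obstacle, since the complete contour must contain nothing beyond these graphs and the line. For the graph parametrized by $z$, the logarithmic Gauss criterion reduces to the condition that $zg'(z)/g(z)$ be real. A direct computation gives
$$
\frac{zg'}{g}=m+nu\Bigl(\frac1{u-b}-\frac1{u+b}\Bigr)=m+\frac{2nb}{u-b^2/u}.
$$
This is real if and only if $u-b^2/u$ is real. Since $\operatorname{Im}(u-b^2/u)=\operatorname{Im}u\,(1+b^2/|u|^2)$, this happens exactly when $u\in\R^*$. The case $u=\pm b$ is excluded from the graph or from $V(A)$, and there $zg'/g=m$, which is real; I will check this consistently. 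So the critical locus of $D^{(1)}$ is the set of $z$ with $z^n$ real, and its image consists precisely of the two graphs $y=mx\pm h_{n,a}(x)$. Which graph is hit by positive and which by negative $u$ switches at $|u|=b$, but the union of the images is the same.

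\emph{Contour of the monomial component and conclusion.} The component $w=z^m$ has a one-dimensional amoeba, so it is entirely critical, and its contour is the line $y=mx$. The three image curves are smooth real-analytic graphs over $\R$ or over $\R\setminus\{\log a\}$, with vertical asymptotes at $x=\log a$. They are pairwise disjoint because $h_{n,a}>0$. Hence the complete contour is a smooth embedded one-manifold that strictly contains $\partial\A$, and $D_{n,m,a}$ has the strict boundary--contour property of Definition~\ref{def:strict}.
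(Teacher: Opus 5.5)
Your proposal is correct and follows essentially the paper's route: your Gauss-ratio computation $zg'/g=m+2nb/(u-b^2/u)$ is the paper's angular-derivative calculation in different notation, since $\partial_\theta\log|g|=-\operatorname{Im}(zg'/g)$, and it gives both the critical rays $z^n\in\R^*$ and the slice extremes that you only asserted. There are two harmless slips: at $u=\pm b$ the ratio is infinite rather than $m$ (these points are not on the curve anyway), and nothing switches at $|u|=b$, because $u>0$ always maps to $y=mx-h_{n,a}(x)$ and $u<0$ always maps to $y=mx+h_{n,a}(x)$.
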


\begin{proof}
The first component is the graph $w=z^mB/A$.  The factors $A$ and $z^mB$ are coprime in $\C[z^{\pm1}]$, so its defining polynomial is irreducible.  Its derivative with respect to $w$ is $A$, which cannot vanish on the graph because $A=0$ would leave the nonzero term $-z^mB$.  The second component is the smooth irreducible monomial curve $w=z^m$.  Restriction of the first factor to the second component gives $A z^m-z^mB=2a^nz^m$, so the components are disjoint.

For $z=re^{i\theta}$, direct differentiation gives
$$
\frac{\partial}{\partial\theta}\log\left|\frac{z^n-a^n}{z^n+a^n}\right|
=\frac{2na^nr^n(r^{2n}+a^{2n})\sin(n\theta)}{|z^n-a^n|^2|z^n+a^n|^2}.
$$
Hence the angular critical rays are $\sin(n\theta)=0$.  Their two critical-value graphs are $y=mx\pm h_{n,a}(x)$, and the complete vertical slice is the interval between them.  The monomial component contributes the line $y=mx$, which lies strictly between the two boundary graphs.  The divergence of $h_{n,a}$ at $x=\log a$ again implies interiority along the exceptional vertical fiber.  Smoothness and disjointness of the three contour images follow from $h_{n,a}>0$ and the nonvanishing of its derivative away from the exceptional end.
\end{proof}

The representative figure takes $(n,m,a)=(3,2,1)$.

\begin{figure}[h]
\centering
\includegraphics[width=.32\textwidth]{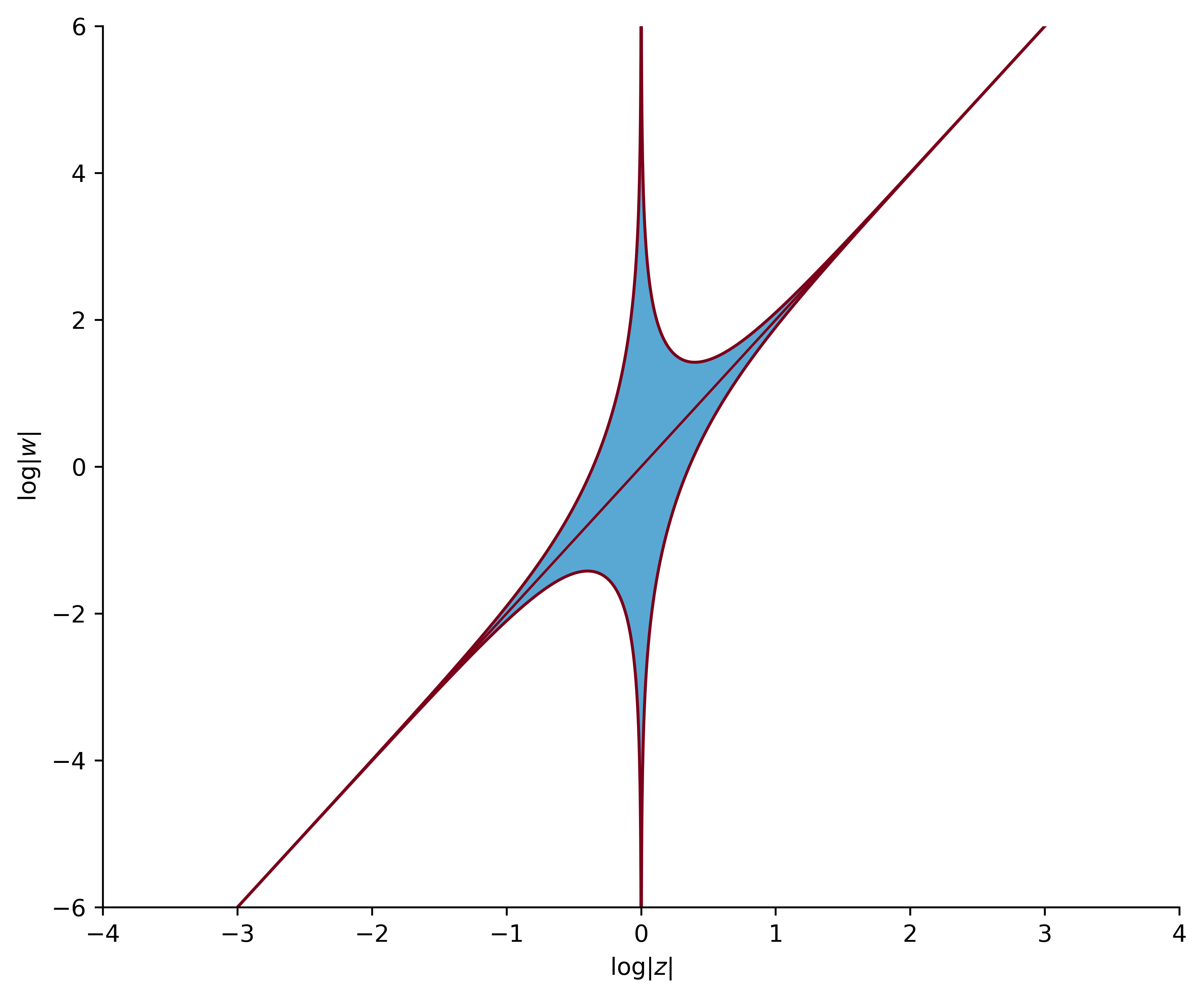}
\caption{The cyclotomic family for $(n,m,a)=(3,2,1)$.  The dark-red central line is internal.}
\label{fig:cyclotomic}
\end{figure}

\subsection{A disconnected family in which neither component is a line}

The preceding monomial component can be replaced by an irreducible hyperbola.  For $n\ge2$ and $a>0$, define
$$
E_{n,a}=V\bigl(((z^n+a^n)zw-(z^n-a^n))(zw-1)\bigr).
$$

\begin{theorem}\label{thm:hyperbola}
The curve $E_{n,a}$ is smooth and has two connected irreducible nonlinear components.  Its amoeba is two-dimensional, its complete contour is smooth and embedded, and the hyperbola contributes the internal contour line $y=-x$.  In particular, $E_{n,a}$ has the strict boundary--contour property.
\end{theorem}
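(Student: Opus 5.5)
The plan is to reduce to Theorem~\ref{thm:cyclotomic} with $m=-1$ wherever possible, since the first factor is the graph $w=z^{-1}(z^n-a^n)/(z^n+a^n)$. The identity $zw=B/A$ shows that the slice computation is identical to the cyclotomic case with slope $-1$.

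First come smoothness, irreducibility and disjointness. The first factor $P=A(z)zw-B(z)$ has degree one in $w$. Since $A$ and $B$ are coprime and $z\nmid B$, $P$ is primitive and hence irreducible. On $V(P)$ we have $\partial_wP=zA$, and this cannot vanish: $A=0$ would force $B=0$, which contradicts $A-B=2a^n\neq0$. So $V(P)$ is a smooth graph over $\C^*\setminus\{z^n=-a^n\}$, with the points $z^n=a^n$ removed because there $w=0$. It is nonlinear in the torus sense, because its Newton polygon has vertices $(0,0),(n,0),(1,1),(n+1,1)$ and is two-dimensional; it is not a binomial curve. The hyperbola $zw=1$ is smooth, irreducible, and not a line. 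Substituting $zw=1$ into $P$ gives $A-B=2a^n\neq0$, so the components are disjoint, and by the remark in Section~2 the union is smooth.

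Second, the amoeba and contour. For $z=re^{i\theta}$ we have $\log|w|=-x+\log|B/A|$. The angular derivative formula from the proof of Theorem~\ref{thm:cyclotomic} shows that the critical rays are $\sin(n\theta)=0$. Since $\sin(n\theta)$ changes sign there, these give precisely the extreme values. The vertical slice is therefore $[-x-h_{n,a}(x),\,-x+h_{n,a}(x)]$, and at $x=\log a$ it is the whole line. In particular the amoeba is two-dimensional, because $h_{n,a}>0$. To see that the complete contour of the graph component equals these two curves, I will use the logarithmic Gauss criterion. On the graph, parametrized by $z$, the map $\Log$ is critical exactly when $z\,\frac{d}{dz}\log w(z)\in\R$. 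Equivalently, the holomorphic function $\log w$ has real-proportional $x$- and $\theta$-derivatives. The partial $\partial_\theta\log|w|$ vanishes only on the rays $\sin n\theta=0$. I must check that the rays are the entire critical set, and I would do this by verifying that $z\,w'/w=-1+nz^n\cdot 2a^n/(z^{2n}-a^{2n})$ is real iff $z^{n}$ is real. This reduces to asking when $u/(u^2-a^{2n})\in\R$ for $u=z^n$. Writing $u=\rho e^{i\phi}$, the imaginary part is proportional to $\sin\phi\,(\rho^2+a^{2n})$, which vanishes iff $\sin\phi=0$. The hyperbola's amoeba and contour are the line $y=-x$. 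Every point of that line lies strictly inside the slice, including near $x=\log a$ where $h_{n,a}\to\infty$.

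Third, smoothness and embeddedness of the union contour. The critical rays consist of the $2n$ rays with $z^n$ real. Those with $z^n>0$ map to $y=-x-h_{n,a}(x)$ and those with $z^n<0$ map to $y=-x+h_{n,a}(x)$, and each image is a graph over $x$, hence embedded. The three curves $y=-x$ and $y=-x\pm h_{n,a}(x)$ are pairwise disjoint since $h_{n,a}>0$, and each is a real-analytic graph away from the exceptional fiber $x=\log a$. The lower branch is not defined at $x=\log a$; its two pieces diverge to $-\infty$ there, and they are two separate noncompact components. The main obstacle is the contour statement of the second step: different rays map to the same image curve, so the image is not injective on the critical locus. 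I must argue that the image is nevertheless an embedded one-manifold, because all rays in one class have identical logarithmic image germs (by the symmetry $z\mapsto e^{2\pi i/n}z$, which preserves $|w|$). The strict inclusion $\partial\A\subsetneq\Cont$ then follows, since the line $y=-x$ is interior.
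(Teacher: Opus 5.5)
Your proposal is correct and follows the paper's own route. You view the first factor as the cyclotomic graph $w=z^{-1}B/A$ of slope $-1$, reuse the angular-derivative computation from Theorem~\ref{thm:cyclotomic} to obtain the slices $[-x-h_{n,a}(x),-x+h_{n,a}(x)]$, get disjointness from $A-B=2a^n$, and observe that the hyperbola contributes the line $y=-x$ lying inside every slice. Your extra checks simply make explicit what the paper leaves implicit: the logarithmic Gauss computation showing the critical set is exactly $\{z^n\in\R\}$, the Newton polygon, and the symmetry $z\mapsto e^{2\pi i/n}z$ for embeddedness. One small correction: the upper graph $y=-x+h_{n,a}(x)$ is also split at $x=\log a$, by the pole $A=0$, and both of its pieces go to $+\infty$.
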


\begin{proof}
The first component is the graph $w=B/(zA)$ and is irreducible because its equation is primitive and linear in $w$.  Its $w$-derivative is $zA$, which cannot vanish along the graph.  The second component $zw=1$ is a smooth irreducible conic in the affine plane and a smooth graph in the torus.  On $zw=1$, the first factor becomes $A-B=2a^n$, so the two components are disjoint.

The logarithmic slices of the first component are centered on $y=-x$ and have half-width $h_{n,a}(x)$.  The calculation in Theorem~\ref{thm:cyclotomic} gives its two boundary-producing contour graphs.  The hyperbola has amoeba and contour $y=-x$, which lies in the interior of every finite slice.  None of the two algebraic components is a line, and the complete contour is the disjoint union of the two outer graphs and the internal central graph.
\end{proof}

Figure~\ref{fig:hyperbola} uses $(n,a)=(4,1)$.

\begin{figure}[h]
\centering
\includegraphics[width=.32\textwidth]{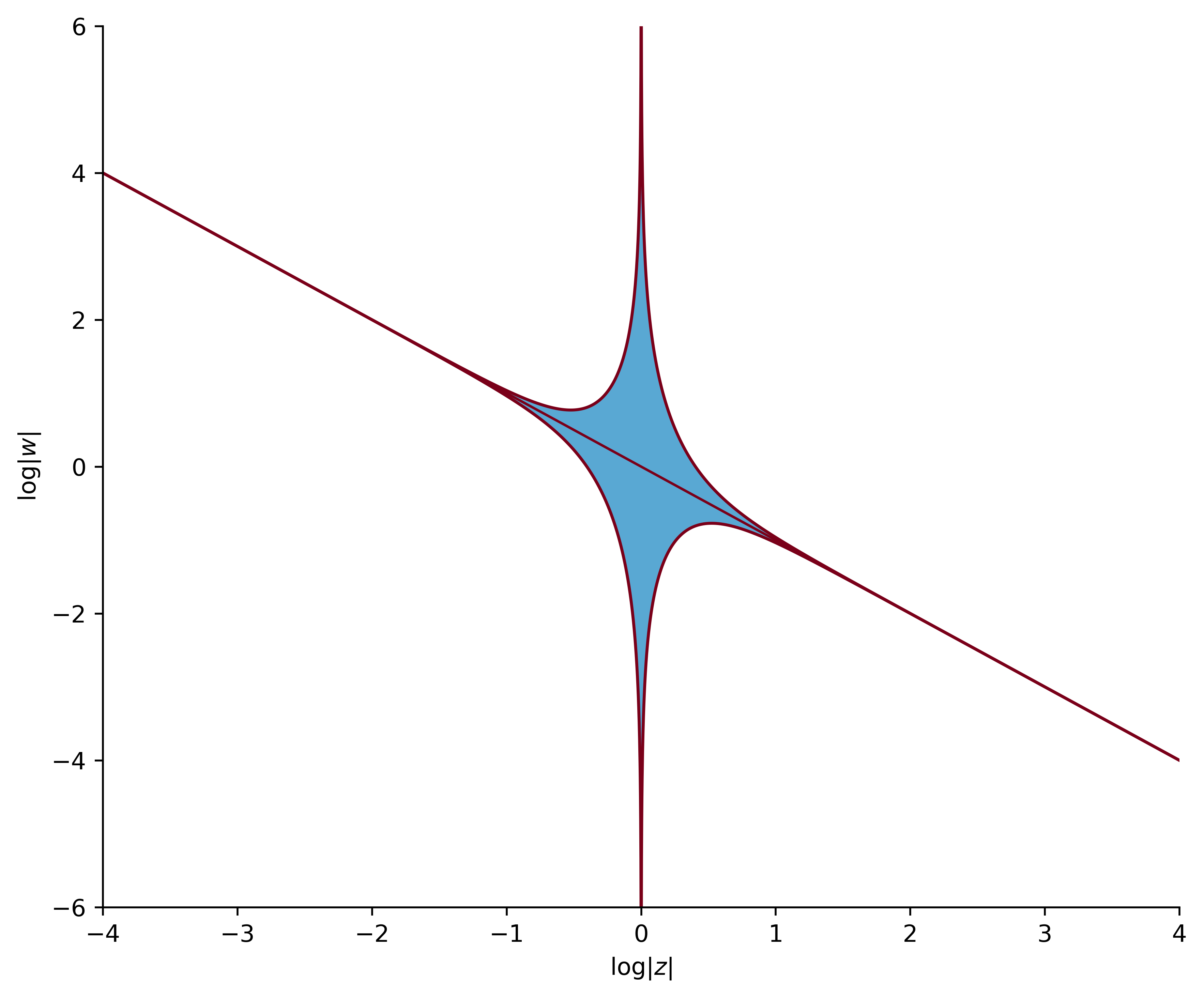}
\caption{The graph--hyperbola example.  Both algebraic components are nonlinear, and the central red line is internal.}
\label{fig:hyperbola}
\end{figure}

\section{Two nonlinear components with two-dimensional nested amoebas}

Let $A=z^n+a^n$ and $B=z^n-a^n$.  Define
$$
G_1=A w-z^mB,\qquad G_2=A^2w-z^mB^2,\qquad
C_{n,m,a}=V(G_1G_2).
$$

\begin{theorem}\label{thm:nested}
For every $n\ge2$, $m\ge1$, and $a>0$, the curve $C_{n,m,a}$ is smooth and has exactly two connected nonlinear irreducible components.  Both component amoebas are two-dimensional.  One has $\A_{V(G_1)}\subset\operatorname{Int}(\A_{V(G_2)})$, and therefore the complete contour of $V(G_1)$ is internal to the union amoeba.  The complete contour of the union is smooth and embedded.
\end{theorem}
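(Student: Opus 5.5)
The plan is to treat both factors as graphs over the $z$-coordinate and reduce everything to the slice calculation already carried out in the proof of Theorem~\ref{thm:cyclotomic}. Throughout, $h=h_{n,a}$ is the half-width function from that theorem.

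\textbf{Algebraic part.} The components are the graphs
$$
V(G_1)=\{w=z^mB/A\},\qquad V(G_2)=\{w=z^mB^2/A^2\}.
$$
\begin{itemize}
\item \emph{Irreducibility.} Each polynomial is linear in $w$. Its coefficients $A$ and $z^mB$ (respectively $A^2$ and $z^mB^2$) are coprime in $\C[z^{\pm1}]$, because $A$ and $B$ have no common roots. Hence both polynomials are primitive and irreducible.
\item \emph{Smoothness.} The $w$-derivatives are $A$ and $A^2$. Neither vanishes on its graph: $A=0$ would force $z^mB=0$, which is impossible since $A-B=2a^n$.
\item \emph{Nonlinearity.} Both polynomials have degree $>1$ because $n\ge2$.
\item \emph{Disjointness.} A common point gives $z^mB/A=z^mB^2/A^2$. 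So either $B=0$, which forces $w=0$ and leaves the torus, or $B=A$, which is impossible. By the remark in Section~2, the union is therefore smooth and has exactly these two connected components.
\end{itemize}

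\textbf{Amoebas and critical loci.} Put $z=re^{i\theta}$ and $x=\log r$. Then
$$
\log|w|=mx+k\log|B/A|,\qquad k=1 \text{ for } G_1,\quad k=2 \text{ for } G_2.
$$
Since each component is a graph over $z$, the map $\Log$ restricted to it is $(x,\theta)\mapsto(x,\,mx+k\log|B/A|)$. Its Jacobian determinant is $k\,\partial_\theta\log|B/A|$. By the displayed derivative in the proof of Theorem~\ref{thm:cyclotomic}, this is a positive factor times $\sin(n\theta)$.

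So the critical locus of each component is exactly the set of rays $\sin(n\theta)=0$. On these rays $z^n=\pm r^n$, and $|B/A|$ equals $e^{\mp h(x)}$. Hence:
\begin{itemize}
\item the vertical slices are $[mx-h,\,mx+h]$ for $G_1$ and $[mx-2h,\,mx+2h]$ for $G_2$;
\item the contours are the graphs $y=mx\pm h(x)$ and $y=mx\pm 2h(x)$.
\end{itemize}
Because $h>0$ for $x\neq\log a$, both amoebas are two-dimensional. Moreover, each finite slice of $\A_{V(G_1)}$ lies in the open interior of the corresponding slice of $\A_{V(G_2)}$.

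\textbf{Main obstacle: interiority and embeddedness near $x=\log a$.} Here $h$ blows up and the slices degenerate. Two things must be checked at this fiber.

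First, interiority. On $|z|=a$ the ratio $|B/A|$ takes every value in $(0,\infty)$, so both amoebas contain the whole line $x=\log a$. To show that a point $(\log a,y_0)$ is interior to $\A_{V(G_2)}$, use $h(x)\to\infty$ as $x\to\log a$. This gives $\delta>0$ such that the slices of $\A_{V(G_2)}$ over $|x-\log a|<\delta$ contain $[y_0-1,y_0+1]$. Combined with the strict inclusion on all other fibers, this yields $\A_{V(G_1)}\subset\operatorname{Int}\A_{V(G_2)}$. In particular, every contour point of $V(G_1)$ is internal to the union amoeba, and the union amoeba equals $\A_{V(G_2)}$.

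Second, embeddedness. The union contour consists of four graphs of real-analytic functions over each of the two intervals $x<\log a$ and $x>\log a$. These graphs are pairwise disjoint, since $0<h<2h$ there. Each branch escapes to $y=\pm\infty$ as $x\to\log a$, so no branch accumulates at a finite point. The union is therefore a closed, smooth, embedded real-analytic one-manifold. The strict inclusion $\partial\A\subsetneq\Cont$ is witnessed by the graphs $y=mx\pm h$.
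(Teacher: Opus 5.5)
Your proposal is correct and follows essentially the same route as the paper: the graph description $w=z^m(B/A)^q$, coprimality of the coefficients for irreducibility, the nonvanishing $w$-derivative $A^q$ for smoothness, the identity $B/A=(B/A)^2$ for disjointness, the exact slices $[mx-qh_{n,a}(x),\,mx+qh_{n,a}(x)]$, and the divergence of $h_{n,a}$ at $x=\log a$ for interiority. You supply more detail than the paper's terse proof, namely the explicit Jacobian in $(x,\theta)$ coordinates, the box neighborhood at the exceptional fiber, and the properness of the contour branches, but the argument is the same.
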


\begin{proof}
Each component is a graph $w=z^m(B/A)^q$ with $q=1$ or $q=2$.  The coefficients $A^q$ and $-z^mB^q$ are coprime in the Laurent polynomial ring, proving irreducibility.  The derivative with respect to $w$ is $A^q$ and does not vanish on the graph, proving smoothness.  If the two graph values were equal, then $B/A=(B/A)^2$.  The value $B/A=0$ would give $w=0$, while $B/A=1$ would imply $a=0$.  Both are impossible in the stated torus and parameter range.  The components are disjoint.

The exact vertical slice of the $q$-th component is
$$
[mx-qh_{n,a}(x),mx+qh_{n,a}(x)].
$$
For $x\ne\log a$, the $q=1$ interval is strictly contained in the interior of the $q=2$ interval.  At the exceptional value, both intervals extend through every finite $y$, and the divergence of the outer half-width gives an open neighborhood of every point of the inner slice.  Thus the entire first amoeba is contained in the interior of the second.  The four contour graphs $y=mx\pm h_{n,a}(x)$ and $y=mx\pm2h_{n,a}(x)$ are pairwise disjoint and smooth in the finite plane.  The outer pair is the union boundary, and the inner pair is internal.
\end{proof}

Figure~\ref{fig:nested} uses $(n,m,a)=(3,2,1)$.

\begin{figure}[h]
\centering
\includegraphics[width=.32\textwidth]{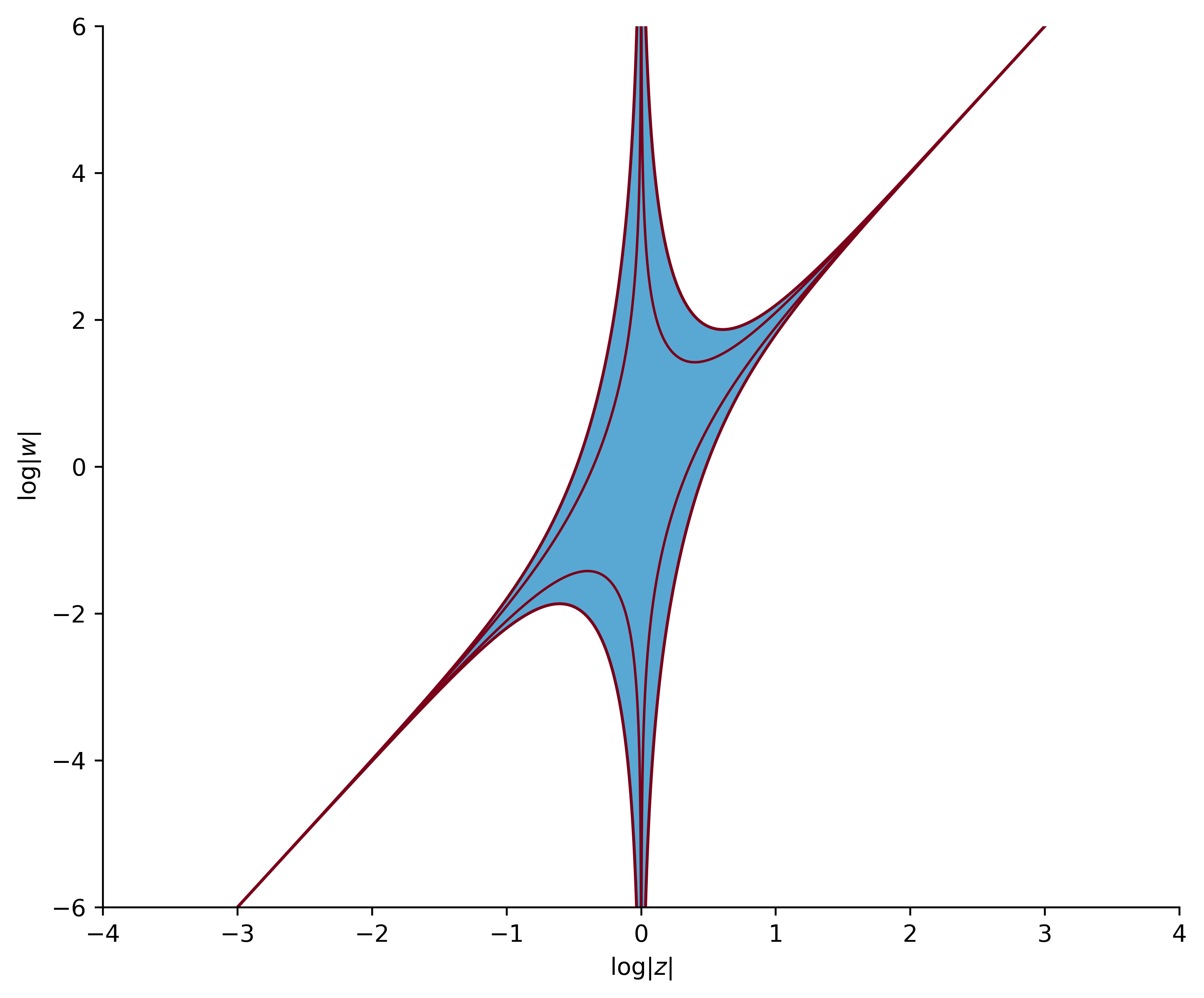}
\caption{Two two-dimensional nested amoebas.  The outer red pair is the boundary and the inner red pair is internal.}
\label{fig:nested}
\end{figure}

\subsection{Arbitrarily many nonlinear components with a parallelogram Newton polygon}

For $c>4$, put
$P_c(z,w)=z^2w+zw^2+z+w-czw$.  Dividing by $zw$ gives the equivalent Laurent equation $z+z^{-1}+w+w^{-1}=c$.  Let $4<c_1<\cdots<c_N$ and set $F_N=\prod_{j=1}^NP_{c_j}$ and $C_N=V(F_N)$.

\begin{theorem}\label{thm:levels}
Every component $V(P_{c_j})$ is smooth, connected, irreducible, and nonlinear, and every component amoeba is two-dimensional.  Distinct components are disjoint.  If $c_N-c_1<4$, then the complete contour of $C_N$ is a smooth embedded real-analytic one-manifold.  For $N\ge2$, suitable contour arcs are internal, and $C_N$ has the strict boundary--contour property.  The Newton polygon of $F_N$ is the parallelogram
$\operatorname{conv}\{(0,N),(N,0),(2N,N),(N,2N)\}$.
\end{theorem}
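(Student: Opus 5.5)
The plan is to reduce everything to the one-parameter family of Laurent equations $\phi(z,w):=z+z^{-1}+w+w^{-1}=c$ and to describe each component amoeba and contour by explicit inequalities in $u=2\cosh x$ and $v=2\cosh y$. The elementary algebraic claims come first. \emph{Disjointness:} a common point of $V(P_{c_j})$ and $V(P_{c_k})$ would give $\phi=c_j=c_k$. \emph{Smoothness:} on the torus the singular points of $V(P_c)$ are the critical points of $\phi$ with $\phi=c$. Now $d\phi=0$ forces $z,w\in\{\pm1\}$, so the critical values are $\pm4$ and $0$, all different from $c>4$. The remark before Definition~\ref{def:strict} then gives smoothness of the union.

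\emph{Irreducibility and nonlinearity:} the Newton polygon of $P_c$ is the parallelogram $Q=\operatorname{conv}\{(1,0),(0,1),(2,1),(1,2)\}$. In a nontrivial factorization, the Newton polygons of the factors would have Minkowski sum $Q$. The only decompositions of $Q$ are into two segments parallel to $(1,1)$ and $(1,-1)$, so $P_c$ would factor, up to monomials, as $(\alpha zw+\beta)(\gamma z+\delta w)$. Comparing coefficients gives $\alpha\gamma=\alpha\delta=\beta\gamma=\beta\delta=1$ and $-c=0$, a contradiction. Hence $V(P_c)$ is irreducible, and it is not a line. I expect \emph{connectedness} to follow from irreducibility plus smoothness. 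Finally, the Newton polygon of $F_N$ is the $N$-fold Minkowski sum $NQ$, which is the stated parallelogram.

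\emph{Amoeba.} For $|z|=e^x$, the set of values of $z+z^{-1}$ is the ellipse with semi-axes $2\cosh x$ and $2|\sinh x|$, where $2\cosh x$ lies along the real axis and $2|\sinh x|$ along the imaginary axis. So $(x,y)\in\A_{V(P_c)}$ if and only if $c$ lies in the Minkowski sum $E_x+E_y$ of the two ellipse curves. The sum of two confocal (foci $\pm2$) centered ellipse curves meets the positive real axis exactly in the interval $[\,|u-v|,\,u+v\,]$. I will verify this by intersecting $E_x$ with $c-E_y$ and tracking the real part. This yields
$$
\A_{V(P_c)}=\{(x,y):\ |u-v|\le c\le u+v\},\qquad u=2\cosh x,\ v=2\cosh y,
$$
which is two-dimensional.

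\emph{Contour.} By the logarithmic Gauss criterion, a point of $V(P_c)$ is critical if and only if $(z-z^{-1})/(w-w^{-1})\in\R$. The main obstacle is to show that, on $\phi=c$, this happens only on the real locus $\{z,w\in\R^*\}$. My first attempt will combine two conditions: the imaginary part of $\phi$ vanishes, and the ratio is real. I will write $z=e^{x+i\theta}$ and $w=e^{y+i\psi}$, and show that these force $\sin\theta=\sin\psi=0$ unless one lands on an excluded critical value. Equivalently, I would show that the real locus already accounts for all fold arcs of the boundary inequalities above, and that a degree count for $\gamma$ leaves no room for other real fibres.

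Granting this, the real points split by sign. The case $z,w>0$ gives the compact oval $O_c=\{u+v=c\}$. Mixed signs give the four noncompact curves $\{|u-v|=c\}$. Both negative is impossible since $c>0$. Each of these curves is smooth, because the gradient $(\sinh x,\pm\sinh y)$ vanishes only at the origin, which does not lie on them.

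\emph{Union geometry and the condition $c_N-c_1<4$.} The union amoeba is the union of the regions $\{|u-v|\le c_j\le u+v\}$. Its boundary is the oval $O_{c_1}$ together with the noncompact curves $|u-v|=c_N$.

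Next I show that the remaining contour pieces are embedded. Ovals of distinct levels are disjoint, and so are the noncompact curves of distinct levels. An intersection of $O_{c_j}$ with $\{|u-v|=c_k\}$ would require $c_k=|u-v|\le u+v-4=c_j-4$, since $\min(u,v)\ge2$. This is impossible when $c_N-c_1<4$, so the complete contour is a disjoint union of smooth embedded curves.

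For internality when $N\ge2$, take $j\ge2$ and any point of $O_{c_j}$. Then $u+v=c_j>c_1$ and $|u-v|\le c_j-4<c_1$. Both inequalities for the level $c_1$ are therefore strict, so the point lies in the interior of $\A_{V(P_{c_1})}$. Thus every oval $O_{c_j}$ with $j\ge2$ is an internal contour component, and $C_N$ has the strict boundary--contour property.
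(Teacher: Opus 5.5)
Your outline follows the paper's structure, but the step you explicitly defer is a genuine gap: you never show that $V(P_c)$ has no nonreal logarithmic critical points. The embeddedness half of the theorem rests on exactly this step.

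The strictness conclusion does not need it. Real points trivially have real Gauss ratio, so $O_{c_j}\subset\Cont$, and your interiority argument uses only the slice inequalities. However, to assert that the complete contour of $C_N$ is the disjoint union of the curves $u+v=c_j$ and $|u-v|=c_j$ (and hence a smooth embedded one-manifold), you must rule out nonreal critical branches, whose images could cross or touch those curves. Neither of your two suggested routes is carried out. The paper's proof of this theorem only asserts the three level sets, so this step has to be supplied.

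It closes quickly. Put $p=z+z^{-1}$ and $q=w+w^{-1}=c-p$, so that $(z-z^{-1})^2=p^2-4$ and $(w-w^{-1})^2=q^2-4$. The ratio $\rho=(z-z^{-1})/(w-w^{-1})$ is real or infinite iff $\rho^2\in[0,\infty]$.
\begin{itemize}
\item If $\rho^2=\lambda\in[0,\infty)$, then $(1-\lambda)p^2+2\lambda cp-4-\lambda(c^2-4)=0$. For $\lambda\ne1$ its discriminant $4\bigl(4\lambda^2+(c^2-8)\lambda+4\bigr)$ is positive because $c^2>8$; for $\lambda=1$ the root is $c/2$.
\item If $\lambda=\infty$, then $q=\pm2$.
\end{itemize}
In all cases $p,q\in\R$, so each of $z,w$ is real or unimodular. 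Both unimodular would give $|p+q|\le4<c$. If $z$ alone is unimodular and nonreal, then $q=c-p>2$ forces $w\in\R_{>0}\setminus\{1\}$, so $\rho\in i\R^*$ and the point is regular. Hence the critical locus is exactly the real locus, and your sign-by-sign description of the contour follows.

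The remaining differences from the paper are legitimate alternatives.
\begin{itemize}
\item \emph{Irreducibility.} You use the only nontrivial Minkowski decomposition of $\Newt(P_c)$ into two primitive segments. This forces a product $(\alpha zw+\beta)(\gamma z+\delta w)$ with no $zw$ term. The paper instead observes that the discriminant $(z^2-(c+2)z+1)(z^2-(c-2)z+1)$ is not a square. Both arguments are fine.
\item \emph{Internality.} For the levels $5,6,7$, the paper exhibits a nonreal point of $V(P_5)$ with nonreal Gauss ratio over the contour point $(t,t)$ of $V(P_7)$. It then invokes the inverse function theorem and only asserts the general case on an open parameter set. Your inequalities $u+v=c_j>c_1$ and $|u-v|\le c_j-4<c_1$ show that every $O_{c_j}$ with $j\ge2$ is internal for all admissible parameters, which is stronger.
\item \emph{The slice criterion.} The cost of your internality argument is reliance on the full criterion $|u-v|\le c\le u+v$, which you only sketch; the paper also merely states it. It is true. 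Write $E_x=\{\zeta:|\zeta-2|+|\zeta+2|=2u\}$ and $c-E_y=\{\zeta:|\zeta-c+2|+|\zeta-c-2|=2v\}$. The triangle inequality shows the filled ellipses are nested when $c<|u-v|$. Their real projections are disjoint when $c>u+v$. Otherwise a vertex of one lies in the closed region of the other while the opposite vertex lies outside it or on it, which forces an intersection.
\item \emph{Union boundary.} Your description $\partial\A_{C_N}=O_{c_1}\cup\{|u-v|=c_N\}$ holds only under $c_N-c_1<4$. In that regime an interval $[|u-v|,u+v]$ of length $2\min(u,v)\ge4$ that meets $[c_1,c_N]$ must contain $c_1$ or $c_N$. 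This description is not needed for your conclusion.
\end{itemize}
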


\begin{proof}
Writing $Q_c=z+z^{-1}+w+w^{-1}-c$, the equations $Q_c=Q_d=0$ would imply $c=d$.  Thus distinct levels are disjoint.  A singular point of $Q_c=0$ would satisfy $1-z^{-2}=1-w^{-2}=0$, so $z,w\in\{1,-1\}$ and $z+z^{-1}+w+w^{-1}\in\{-4,0,4\}$, contrary to $c>4$.  Hence every component is smooth.

As a quadratic in $w$, the cleared polynomial has discriminant
$(z^2-cz+1)^2-4z^2=(z^2-(c+2)z+1)(z^2-(c-2)z+1)$.  For $c>4$ this polynomial has four distinct nonzero roots and is not a square in $\C(z)$.  The quadratic is therefore irreducible over the Laurent polynomial ring.  Each component is connected and nonlinear.

Put $A(x)=2\cosh x$, $B(y)=2\cosh y$, $S=A+B$, and $D=|A-B|$.  The exact amoeba criterion is $D(x,y)\le c\le S(x,y)$.  The complete contour of one level is the disjoint union of
$$
A(x)+B(y)=c,\qquad A(x)-B(y)=c,\qquad B(y)-A(x)=c.
$$
These are regular embedded level curves because their gradients can vanish simultaneously only at $(0,0)$, where their levels are $4$, $0$, and $0$.  Contours of equal sign type and different levels are disjoint.  An intersection between $A+B=c_j$ and $A-B=c_k$ would force $2B=c_j-c_k\ge4$.  The assumption $c_N-c_1<4$ excludes this and all symmetric cases.  Thus the complete union contour is smooth and embedded.

The union amoeba criterion is that the interval $[D,S]$ contain at least one parameter $c_j$.  For strictness, take $N\ge2$ and choose an outer positive-real critical value of a larger level that is regularly covered by a lower level.  For the concrete levels $5,6,7$, let $4\cosh t=7$.  Then $(t,t)$ belongs to the contour of $C_7$.  Choosing $\theta$ with $\cos\theta=5/7$ gives $z=e^{t+i\theta}$ and $w=e^{t-i\theta}$ on $C_5$ with logarithmic value $(t,t)$.  The logarithmic Gauss ratio at this lift is nonreal, so the lift is regular and $(t,t)$ is interior to the union amoeba.  The same covering argument applies on an open set of parameter pairs inside the separation regime.  Hence the contour strictly contains the boundary.

Finally, $\Newt(P_c)=\operatorname{conv}\{(0,1),(1,0),(2,1),(1,2)\}$.  Newton polytopes add under products, yielding the stated parallelogram.
\end{proof}

\begin{figure}[h]
\centering
\includegraphics[width=.32\textwidth]{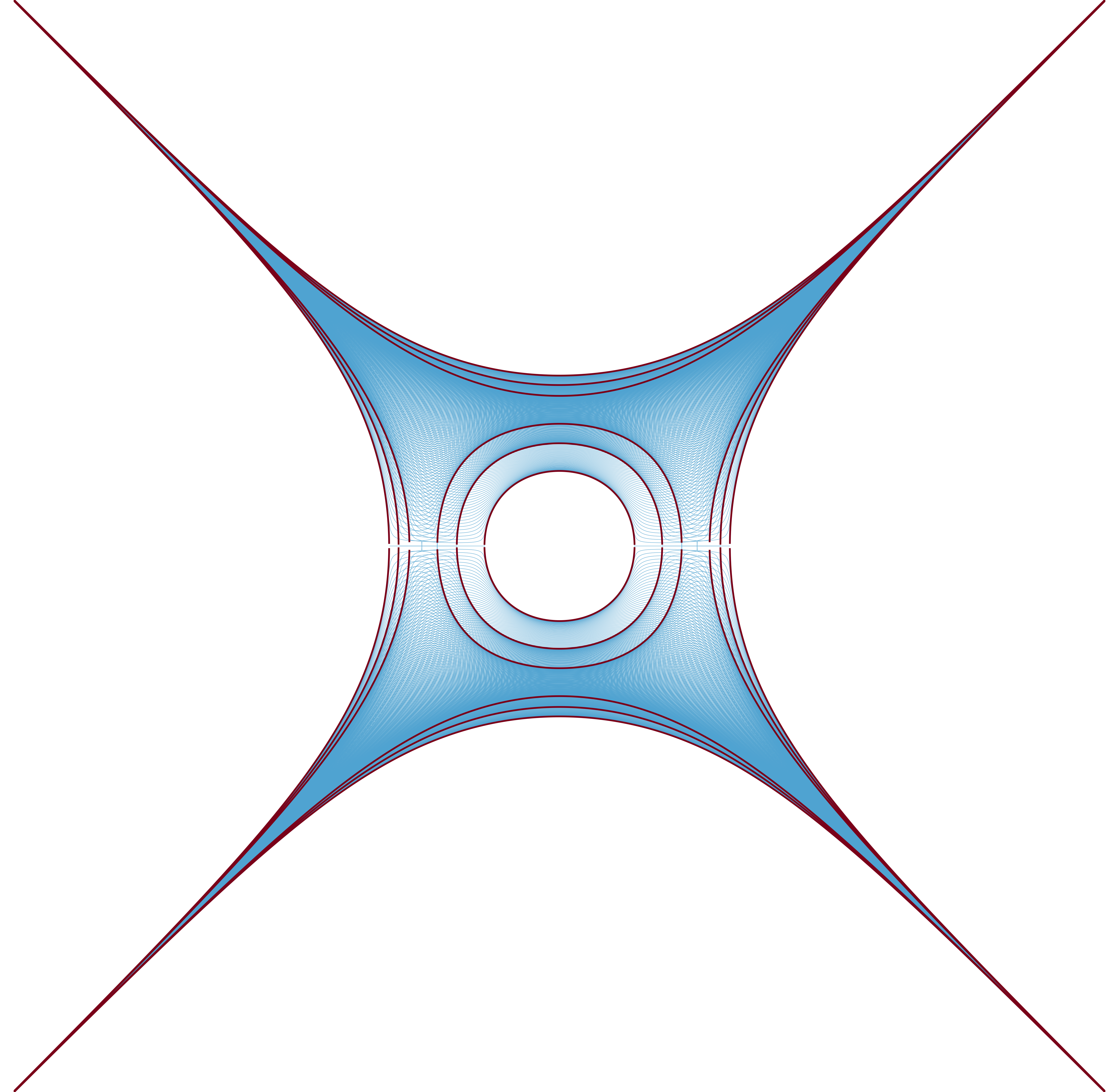}
\caption{The union amoeba for $c_1=5,c_2=6,c_3=7$ and the complete contours of all three nonlinear components.}
\label{fig:levels}
\end{figure}

\section{The connected irreducible circle}

The following example involving a circle was studied by G. Mikhalkin and O. Viro in an unpublished preprint, probably dating from 2002. I obtained it from J.-J. Risler in 2005.  For $R>1$, define
$C_R=V((z-R)^2+(w-R)^2-1)\subset(\C^*)^2$.

\begin{theorem}\label{thm:circle}
For every $R\ge10$, the curve $C_R$ is smooth, connected, and irreducible.  Its complete contour is a smooth embedded real-analytic one-manifold and
$$
\Cont_{C_R}=\partial\A_{C_R}\sqcup\Gamma_R,
$$
where $\Gamma_R=\{(\log(R+\cos\theta),\log(R+\sin\theta)):0\le\theta<2\pi\}$ is contained in $\operatorname{Int}(\A_{C_R})$.
\end{theorem}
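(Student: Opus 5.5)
I would begin with the algebraic facts.  Setting $u=z-R$ and $v=w-R$ turns the equation into $u^2+v^2=1$, a smooth affine conic with no singular point.  Since $R\ge10>1$, the conic meets neither $z=0$ nor $w=0$ in a singular way: the removed points are only finitely many.  The conic is irreducible because $u^2+v^2-1$ is an irreducible quadric.  It is connected because it is the rational curve $\C^*\cong\{u+iv=s\}$ with $u=(s+s^{-1})/2$ and $v=(s-s^{-1})/(2i)$, and removing finitely many points from $\C^*$ keeps it connected.  I would then work entirely in the parameter $s\in\C^*$, so that
$$
z=R+\tfrac12(s+s^{-1}),\qquad w=R+\tfrac1{2i}(s-s^{-1}).
$$
The logarithmic Gauss map is $\gamma=[z(z-R):w(w-R)]$.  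By the criterion $\Crit(\Log|_C)=\gamma^{-1}(\mathbb{RP}^1)$, the critical locus is the real-analytic curve
$$
S=\{s:\operatorname{Im}\bigl(z(z-R)\overline{w(w-R)}\bigr)=0\}.
$$

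Next I would identify the components of $S$.  The unit circle $|s|=1$ gives the real points $u=\cos\theta$, $v=\sin\theta$, with $z,w$ real and positive, so the circle lies in $S$ and maps to $\Gamma_R$.  For large $R$ I would expand the reality condition.  Away from the poles $s\to0,\infty$ one has $z(z-R)\approx Ru$ and $w(w-R)\approx Rv$, so the condition becomes approximately $\operatorname{Im}(u\bar v)=0$.  Writing $s=\rho e^{i\theta}$, the quantity $\operatorname{Im}(u\bar v)$ is a nonzero multiple of $(\rho^2-\rho^{-2})$.  Hence the only critical branch at moderate $|s|$ is the unit circle, and it is a nondegenerate zero set that persists for $R\ge10$ by an implicit-function estimate.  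The remaining critical components live near $s=0,\infty$ and near the points where $z$ or $w$ vanishes, that is, on the toric tentacles.  There I would show that they are graphs mapping to the remote boundary folds.  The check is that the outer sheets cover only one side, which is controlled by the standard amoeba of a line-like asymptotic piece.  Embeddedness of $\Cont$ then requires four things: injectivity of $\Log$ on each branch, nondegenerate folds (no cusps, i.e.\ $\gamma$ unramified on $S$), disjointness of $\Gamma_R$ from the remote branches, and no off-diagonal coincidences $\Log(p)=\Log(q)$ with tangent contour germs.  For $\Gamma_R$ itself, $\theta\mapsto(\log(R+\cos\theta),\log(R+\sin\theta))$ is visibly an injective immersion.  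The remote branches lie at distance of order $\log R$ away from it.

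For interiority of $\Gamma_R$, I would exhibit a nonreal regular point over each point of $\Gamma_R$.  Given real $(x_0,y_0)=(R+\cos\theta,R+\sin\theta)$, I look for $z=x_0e^{i\alpha}$ and $w=y_0e^{i\beta}$ on $C_R$ with $(\alpha,\beta)\ne(0,0)$.  The equation in moduli reduces to two real equations in $(\alpha,\beta)$.  Expanding for small angles, the equation is $(x_0e^{i\alpha}-R)^2+(y_0e^{i\beta}-R)^2=1$.  Its imaginary part at first order gives $x_0\cos\theta\,\alpha+y_0\sin\theta\,\beta\approx0$, and its real part gives a quadratic correction.  This suggests an explicit second lift with angles of order $R^{-1/2}$, similar to the $\cos\theta=5/7$ lift in Theorem~\ref{thm:levels}.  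At that lift I would verify that $\gamma$ is nonreal, so the point is regular and covers a full neighbourhood.  Together with $\Gamma_R\subset\A_{C_R}$, this gives $\Gamma_R\subset\operatorname{Int}\A_{C_R}$, while the remote folds are boundary.  Hence $\Cont=\partial\A\sqcup\Gamma_R$.

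The main obstacle is this quantitative construction of the nonreal regular sheet uniformly in $\theta$, including near $\theta$ with $\cos\theta\sin\theta=0$, where the linearization degenerates.  I would first solve exactly using the rational parametrization, by fixing $|z|$ and $|w|$ and solving for $s$ off the unit circle.  I would then bound the error terms with explicit constants valid for $R\ge10$.  The second delicate point is excluding cusps and off-diagonal tangencies among the remote branches, which I expect to handle by asymptotic expansion in $1/R$.
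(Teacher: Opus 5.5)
The gap is in the interiority of $\Gamma_R$; the rest of your outline is sound. That includes the algebraic part and your reduction at moderate $|s|$ to $\operatorname{Im}(u\bar v)=\tfrac14(\rho^2-\rho^{-2})$, which isolates the unit circle there.

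The second lift you propose does not exist. You look for a small-angle perturbation $z=x_0e^{i\alpha}$, $w=y_0e^{i\beta}$ of the real point, with angles of order $R^{-1/2}$. The real part of $(x_0e^{i\alpha}-R)^2+(y_0e^{i\beta}-R)^2$ is even in $\alpha$ and in $\beta$. It equals $1-x_0(R+2\cos\theta)\alpha^2-y_0(R+2\sin\theta)\beta^2+O(\alpha^4+\beta^4)$. So the ``quadratic correction'' you mention is a positive definite form, and $(\alpha,\beta)=(0,0)$ is an isolated solution. This is just the fold normal form $(s,t)\mapsto(s,t^2)$. Over a fold value the local sheet has a single preimage and covers only one side, so no nearby sheet can make the oval internal. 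The degeneration you anticipate at $\cos\theta\sin\theta=0$ is an artifact of this ansatz.

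The missing idea is that the covering sheet lies far away in the source. Dividing the modulus equations by $R^2$ gives $(e^{i\alpha}-1)^2+(e^{i\beta}-1)^2\approx0$. Its nontrivial solutions are $(\alpha,\beta)\approx(\mp\pi/2,\pm\pi/2)$, i.e.\ $(z,w)\approx(-iR,iR)$ and its conjugate. In your parameter this is $s\approx-2R(1+i)$, together with $1/\bar s$.

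The paper's proof uses exactly this sheet. It rescales $s=R\xi$ with $\xi=a+ib$. Then $\Log-(\log R,\log R)$ converges in $C^1$ on compact sets to $(\log|1+\xi/2|,\log|1-i\xi/2|)$, which is the amoeba map of a line. Its Jacobian is a positive multiple of $1+(a+b)/2$, so it is nonzero near $\xi=-2-2i$, where both moduli equal $1$. A quantitative inverse-function estimate, with explicit derivative-error bounds for $R\ge10$, shows this single regular sheet covers the fixed square $(\log R,\log R)+[-1/8,1/8]^2$. That square contains all of $\Gamma_R$, since $|\log(1+\cos\theta/R)|$ and $|\log(1+\sin\theta/R)|$ are $<1/8$. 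Interiority then holds uniformly in $\theta$ in one stroke.

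The same rescaled picture corrects a second claim of yours. You say the remote branches lie at distance of order $\log R$ from $\Gamma_R$; they do not. The limiting critical set is the line $a+b=-2$. For example, $\xi=-1-i$ maps to $(\log 2^{-1/2},\log 2^{-1/2})$, so the boundary folds pass within about $1/2$ of $(\log R,\log R)$. Separation of $\Gamma_R$ from the boundary must therefore come from the $O(1)$ covering estimate, not from a $\log R$ gap. Also, in your off-diagonal coincidence analysis you must discard the conjugate pairs $s\leftrightarrow 1/\bar s$. These always have the same logarithmic image and are not genuine self-intersections.
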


\begin{proof}
The gradient can vanish only at $(R,R)$, which does not lie on the conic.  Its projective closure is a nonsingular conic, hence it is irreducible, and deleting finitely many toric boundary points leaves a connected Riemann surface.  A rational parametrization is
$$
z_R(\zeta)=R+\frac12(\zeta+\zeta^{-1}),\qquad
w_R(\zeta)=R+\frac{1}{2i}(\zeta-\zeta^{-1}).
$$
The positive real circle corresponds to $|\zeta|=1$.  Its logarithmic image $\Gamma_R$ is embedded because the coordinatewise logarithm is a diffeomorphism on the positive quadrant.  Its logarithmic Gauss map is real, so $\Gamma_R$ is part of the contour.

To prove internality, put $\zeta=R\xi$ and write $\xi=\alpha+i\beta$.  After subtracting $(\log R,\log R)$, the logarithmic map converges in $C^1$ on compact sets to the coordinatewise logarithm of
$A=(1+\alpha/2)^2+\beta^2/4$ and $B=(1+\beta/2)^2+\alpha^2/4$.  The Jacobian determinant of $(A,B)$ is $1+(\alpha+\beta)/2$.  On the rational box $[-5/2,-3/2]^2$ it belongs to $[-3/2,-1/2]$.  For $R\ge10$, direct rational interval evaluation bounds the derivative error by $9/100$, so the exact determinant remains separated from zero by $41/100$.  The image of the smaller box $[-9/4,-7/4]^2$ contains the target square $[-1/8,1/8]^2$.  Meanwhile
$|\log((R+\cos\theta)/R)|<1/8$ and $|\log((R+\sin\theta)/R)|<1/8$.  Thus a regular nonreal sheet covers a neighborhood of the entire real logarithmic oval, proving $\Gamma_R\subset\operatorname{Int}(\A_{C_R})$.

Writing $\zeta=u+iv$, the exact critical equation factors as $(u^2+v^2-1)H_R(u,v)=0$.  The first factor gives the internal real oval.  The remaining factor gives the remote folds.  Compactified interval evaluation gives normalized lower bounds $1/50$ for the critical-source gradient, $1/200$ for the contour tangent, and $1/500$ for the saturated off-diagonal two-lift separation when $R\ge10$.  Reciprocal charts separate the toric ends.  Hence the remote critical components map to disjoint embedded definite folds and exhaust the amoeba boundary.  This proves the asserted disjoint decomposition.
\end{proof}

The figure uses $R=10$.  Every red branch belongs to the complete contour, including the small internal oval.

\begin{figure}[h]
\centering
\includegraphics[width=.32\textwidth]{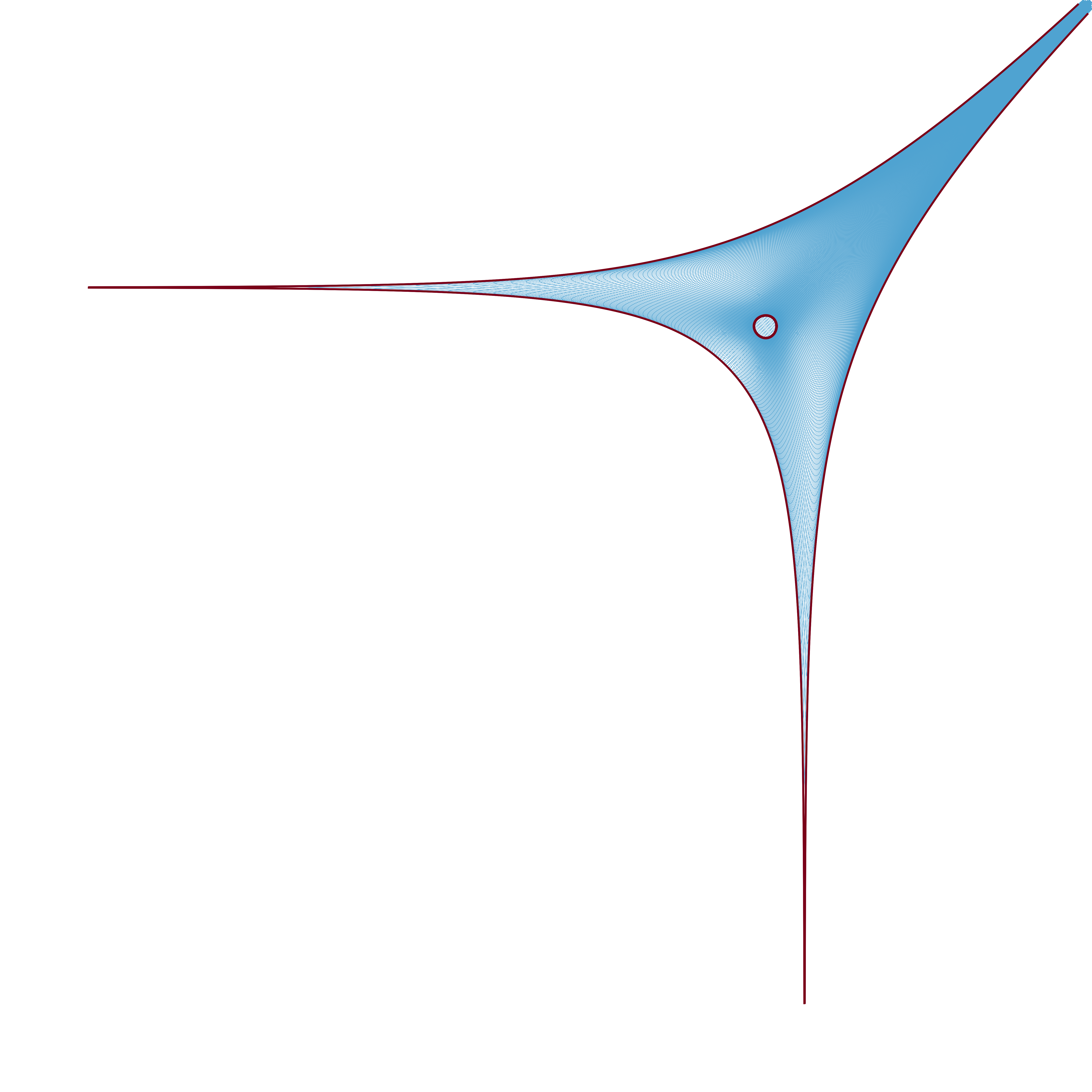}
\caption{The amoeba of the irreducible translated circle in light blue and its complete contour in dark red.}
\label{fig:circle}
\end{figure}


\subsection{Two and three concentric conics with internal contour ovals}
\label{subsec:two-three-concentric-conics}

The translated-circle construction extends naturally to disconnected curves formed by two or three concentric complex conics.  These examples exhibit several internal contour ovals while keeping every algebraic component smooth, irreducible, and nonlinear.  They also illustrate why smoothness of the source curve, smoothness of each component contour, and smoothness of the complete contour of the union are logically different assertions.  The complete contour is the full critical-value set $\Cont(\A_C)=\Log(\Crit(\Log|_C))$, as in the real-degree framework of Lang, Shapiro, and Shustin \cite{LSS2021}.  It always contains $\partial\A_C$, but it may also contain critical branches in $\operatorname{Int}(\A_C)$.  The distinction between singularities of the critical source and singularities of its logarithmic image is central to the analytic theory of contours \cite{NisseAnalyticity,NisseSing}.

Fix $R>0$ and define $f_{R,r}(z,w)=(z-R)^2+(w-R)^2-r^2$ for $0<r<R$.  The positive real locus of $V(f_{R,r})$ is the Euclidean circle with center $(R,R)$ and radius $r$, whereas $V(f_{R,r})\subset\C^2$ is a complex affine conic.  The amoeba is the logarithmic image of the full complex curve and not merely the logarithmic image of its real circle.  The Newton polygon of a factor is $\Delta_2=\operatorname{conv}\{(0,0),(2,0),(0,2)\}$, and its projective closure is a nonsingular rational conic.

The gradient is $df_{R,r}=(2(z-R),2(w-R))$.  Its only possible zero is $(R,R)$, which is not on the curve because $f_{R,r}(R,R)=-r^2$.  Hence every factor curve is smooth.  If $r\ne\rho$, the equations $f_{R,r}=f_{R,\rho}=0$ have no common solution, since subtraction would give $r^2=\rho^2$.  Products formed from distinct radii therefore define disjoint unions of smooth conics.

For a smooth plane curve $V(g)$, the logarithmic Gauss map is $\gamma_g(z,w)=[zg_z:wg_w]$.  A point of $V(g)$ is critical for the restriction of $\Log$ precisely when $\gamma_g(z,w)\in\mathbb P^1(\R)$ \cite{Mikhalkin2000}.  For the circle factor, $\gamma_{R,r}(z,w)=[z(z-R):w(w-R)]$, and the criticality equation is
$$
\operatorname{Im}\left(z(z-R)\overline{w(w-R)}\right)=0.
$$
If $F$ is a product of distinct factors, then on $V(f_{R,r_j})$ both logarithmic derivatives of $F$ acquire the same nonzero factor $\prod_{k\ne j}f_{R,r_k}$.  This factor cancels in projective coordinates.  Thus the logarithmic critical locus of the product is the disjoint union of the critical loci of its components, while the complete contour is the set-theoretic union of their critical-value images.

The radius dependence is reduced exactly to the unit-radius model.  Substituting $z=rZ$ and $w=rW$ gives $f_{R,r}(rZ,rW)=r^2((Z-R/r)^2+(W-R/r)^2-1)$.  Hence the torus dilation $D_r(Z,W)=(rZ,rW)$ identifies $V(f_{R/r,1})$ with $V(f_{R,r})$.  Since $\Log\circ D_r=\Log+(\log r,\log r)$, one obtains $\A_{R,r}=\A_{R/r,1}+(\log r,\log r)$ and $\Cont(\A_{R,r})=\Cont(\A_{R/r,1})+(\log r,\log r)$.  Smoothness, ramification, self-intersection behavior, and the internal or boundary-producing character of an individual contour branch are preserved by this translation.

The positive real circle has the parametrization $z=R+r\cos\theta$ and $w=R+r\sin\theta$.  Because $r<R$, both coordinates remain positive.  Its logarithmic image is
$$
\Gamma_{R,r}=\{(\log(R+r\cos\theta),\log(R+r\sin\theta)):0\le\theta<2\pi\}.
$$
The coordinatewise logarithm is a diffeomorphism from the positive quadrant to $\R^2$, so $\Gamma_{R,r}$ is a smooth embedded oval.  It is contained in the complete contour because the logarithmic Gauss map is real along the positive real circle.  For the unit circle translated to a center $S\ge10$, the affine and reciprocal critical-locus calculations certify that this real oval has no ramification and is covered by another logarithmic sheet.  It therefore lies in the interior of the amoeba.  Transport by $D_r$ yields $\Gamma_{R,r}\subset\operatorname{Int}(\A_{R,r})$ whenever $R/r\ge10$.  This conclusion depends on the critical-locus and sheet calculation, rather than on the appearance of a plot; this is precisely the distinction required in the study of analytic contour images \cite{NisseAnalyticity,NisseSing}.

\paragraph{The two-circle example.}
Let $0<r_1<r_2<R$ and put $F_{R;r_1,r_2}=f_{R,r_1}f_{R,r_2}$.  The curve $C_{R;r_1,r_2}=V(F_{R;r_1,r_2})\cap(\C^*)^2$ is the disjoint union of two irreducible conics.  At a point of the $j$-th component, $dF_{R;r_1,r_2}=f_{R,r_k}df_{R,r_j}$ with $f_{R,r_k}=r_j^2-r_k^2\ne0$, so the product curve is smooth.  Its Newton polygon is the Minkowski sum $2\Delta_2=\operatorname{conv}\{(0,0),(4,0),(0,4)\}$, although the normalization of each component has genus zero.

\begin{proposition}[Two internal contour ovals]
\label{prop:two-circle-ovals}
If $0<r_1<r_2\le R/10$, then $C_{R;r_1,r_2}$ is a smooth disconnected curve with two nonlinear irreducible components.  Its complete contour contains two pairwise disjoint smooth embedded ovals $\Gamma_{R,r_1}$ and $\Gamma_{R,r_2}$ in the interior of its amoeba.  Consequently, $\partial\A_{C_{R;r_1,r_2}}\subsetneq\Cont(\A_{C_{R;r_1,r_2}})$.
\end{proposition}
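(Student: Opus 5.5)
The plan is to assemble the proposition from the facts already set up in this subsection, reducing each component to the unit-radius circle of Theorem~\ref{thm:circle}. Smoothness and disjointness are already established: each factor $f_{R,r_j}$ has nonvanishing gradient on its zero set, distinct radii give disjoint zero sets, and along the $j$-th component $dF=f_{R,r_k}\,df_{R,r_j}$ with $f_{R,r_k}=r_j^2-r_k^2\neq0$. Each component is a nonsingular conic, hence irreducible and nonlinear. Since the common factor cancels in $[zF_z:wF_w]$, the logarithmic critical locus of the product is the disjoint union of the component critical loci. So $\Cont(\A_C)=\Cont(\A_{R,r_1})\cup\Cont(\A_{R,r_2})$, and in particular it contains $\Gamma_{R,r_1}$ and $\Gamma_{R,r_2}$.

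Next I would show that the two ovals are embedded and disjoint. Each $\Gamma_{R,r_j}$ is the image of the Euclidean circle of radius $r_j$ about $(R,R)$ under the coordinatewise logarithm. Because $r_j<R$, these circles lie in the positive quadrant. The coordinatewise logarithm is a diffeomorphism of the positive quadrant onto $\R^2$, and the two circles are disjoint (they are concentric with different radii). Hence the ovals are smooth, embedded and pairwise disjoint, and the oval for $r_1$ is nested inside the oval for $r_2$.

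For internality I would use the dilation $D_{r_j}$. It identifies $V(f_{R,r_j})$ with $V(f_{R/r_j,1})$ and translates amoebas and contours by $(\log r_j,\log r_j)$. Since $R/r_j\ge10$, Theorem~\ref{thm:circle} applies with center $R/r_j$: the real oval lies in $\operatorname{Int}(\A_{C_{R/r_j}})$. Translating back gives $\Gamma_{R,r_j}\subset\operatorname{Int}(\A_{R,r_j})$. Since $\A_{R,r_j}\subset\A_C$ and interiors are monotone, each oval lies in $\operatorname{Int}(\A_C)$.

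Strictness then follows. $\partial\A_C$ is disjoint from $\operatorname{Int}(\A_C)$, while the contour contains the nonempty ovals, so $\partial\A_C\subsetneq\Cont(\A_C)$.

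The only delicate point is the hypothesis. Theorem~\ref{thm:circle} is stated for centers $\ge10$, and $r_1<r_2\le R/10$ gives $R/r_j\ge10$ for both components. I would also note explicitly that the proposition claims neither that the whole union contour is smooth and embedded nor that no contour branches of different components intersect. It only asserts that the contour contains two disjoint internal ovals. So I would not attempt a cross-component separation estimate; the main work is already contained in the internality argument of Theorem~\ref{thm:circle}.
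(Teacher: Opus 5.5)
Your proposal is correct and follows the paper's proof essentially step for step. The algebraic assertions come from the gradient and disjointness computations. Internality of each $\Gamma_{R,r_j}$ comes from transporting Theorem~\ref{thm:circle} at center $R/r_j\ge10$ by the dilation $D_{r_j}$ and then using $\A_{R,r_j}\subset\A_{C_{R;r_1,r_2}}$. Disjointness of the two ovals comes from injectivity of the coordinatewise logarithm on the positive quadrant. Your closing remark that the proposition asserts nothing about global embeddedness of the union contour matches the paper's discussion that follows it.
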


\begin{proof}
The algebraic assertions follow from the gradient and disjointness calculations above.  Since $r_j\le r_2\le R/10$, one has $R/r_j\ge10$ for both components.  The unit-circle certificate transported by $D_{r_j}$ places $\Gamma_{R,r_j}$ in $\operatorname{Int}(\A_{R,r_j})$.  The amoeba of the union is $\A_{R,r_1}\cup\A_{R,r_2}$, and any open set contained in one component amoeba is contained in the union.  Both ovals therefore lie in the interior of the union amoeba.  They belong to the complete contour and are disjoint because they are the images, under the injective coordinatewise logarithm on the positive quadrant, of two disjoint Euclidean circles.  Thus the contour contains points outside the amoeba boundary.
\end{proof}

For $R=10$, $r_1=1/2$, and $r_2=1$, the polynomial is
$$
F_2(z,w)=\left((z-10)^2+(w-10)^2-\frac14\right)
\left((z-10)^2+(w-10)^2-1\right).
$$
The effective unit centers are $20$ and $10$, so Proposition~\ref{prop:two-circle-ovals} applies.  Figure~\ref{fig:two-circle-subsection} displays the union amoeba as a non-flat blue line cloud and the sampled complete contour in dark red.  White remains visible between the traces, and their overlap gives local variation in blue density.  The two small nested red ovals are internal contour branches.

\paragraph{The three-circle example.}
Let $0<r_1<r_2<r_3<R$ and put $F_{R;r_1,r_2,r_3}=f_{R,r_1}f_{R,r_2}f_{R,r_3}$.  Its zero set is a smooth disjoint union of three irreducible conics.  The Newton polygon of the product is $3\Delta_2=\operatorname{conv}\{(0,0),(6,0),(0,6)\}$.  As in the two-factor case, the interior lattice points of this product polygon do not give the genera of the three separate normalizations.

\begin{proposition}[Three internal contour ovals]
\label{prop:three-circle-ovals}
If $0<r_1<r_2<r_3\le R/10$, then $C_{R;r_1,r_2,r_3}$ is a smooth disconnected curve with three nonlinear irreducible components.  Its complete contour contains three pairwise disjoint smooth embedded ovals $\Gamma_{R,r_1}$, $\Gamma_{R,r_2}$, and $\Gamma_{R,r_3}$ in the interior of its amoeba.  Consequently, $\partial\A_{C_{R;r_1,r_2,r_3}}\subsetneq\Cont(\A_{C_{R;r_1,r_2,r_3}})$.
\end{proposition}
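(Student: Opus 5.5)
The argument mirrors the proof of Proposition~\ref{prop:two-circle-ovals}, now with three factors. First I will settle the algebraic assertions. Each factor $f_{R,r_j}$ defines a smooth irreducible conic, because its gradient vanishes only at $(R,R)$ and $f_{R,r_j}(R,R)=-r_j^2\ne0$. Its projective closure is a nonsingular conic, so each component is irreducible and nonlinear. For $j\ne k$, subtracting $f_{R,r_j}=0$ from $f_{R,r_k}=0$ gives $r_j^2=r_k^2$, which is impossible. Hence the three components are pairwise disjoint.

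At a point of the $j$-th component,
$$
dF_{R;r_1,r_2,r_3}=\Bigl(\prod_{k\ne j}f_{R,r_k}\Bigr)df_{R,r_j},\qquad \prod_{k\ne j}f_{R,r_k}=\prod_{k\ne j}(r_j^2-r_k^2)\ne0,
$$
so the product curve is smooth. Each component is a nonsingular affine conic with finitely many toric points removed, so it is connected. The curve therefore has exactly three connected components.

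Next I will place each oval in the interior of the union amoeba. Since $r_j\le r_3\le R/10$, the ratio $R/r_j$ is at least $10$ for every $j$. Theorem~\ref{thm:circle} applied with center $S=R/r_j$ gives $\Gamma_{R/r_j,1}\subset\operatorname{Int}(\A_{R/r_j,1})$. Transporting by the dilation $D_{r_j}$, which translates logarithmic space by $(\log r_j,\log r_j)$, gives $\Gamma_{R,r_j}\subset\operatorname{Int}(\A_{R,r_j})$. The union amoeba equals $\A_{R,r_1}\cup\A_{R,r_2}\cup\A_{R,r_3}$, and an open neighborhood inside one of these sets lies inside the union. So all three ovals lie in $\operatorname{Int}(\A_C)$.

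It remains to check that the ovals belong to the complete contour and are pairwise disjoint. On $V(f_{R,r_j})$ the logarithmic Gauss map of the product equals that of the factor, because the common nonzero factor $\prod_{k\ne j}f_{R,r_k}$ cancels projectively. On the positive real circle $z=R+r_j\cos\theta$, $w=R+r_j\sin\theta$, the quantities $z(z-R)$ and $w(w-R)$ are real, so the Gauss map is real and the points are critical. Since $r_j<R$ the circle lies in the open positive quadrant. The three circles are concentric with distinct radii, hence disjoint, and their images under the injective coordinatewise logarithm are three disjoint smooth embedded ovals. Any point of one of these ovals is a contour point that is not in $\partial\A_C$, which gives the strict inclusion.

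The only delicate step is the interiority certificate, and it is already available: it is exactly the unit-circle statement of Theorem~\ref{thm:circle} combined with the dilation identity $\Log\circ D_r=\Log+(\log r,\log r)$. I do not expect further obstacles, since the proposition claims only that the contour contains these internal ovals, not that the full union contour is embedded.
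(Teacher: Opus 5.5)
Your proof is correct and follows the paper's argument essentially step by step. You check componentwise smoothness and disjointness, reduce each factor to the unit-circle Theorem~\ref{thm:circle} with center $R/r_j\ge10$ via the dilation $D_{r_j}$, pass to the union amoeba, and get disjointness of the ovals from injectivity of the coordinatewise logarithm on the positive quadrant; you also rightly note that the proposition does not claim global embeddedness of the union contour, which matches the paper's remark on the smoothness status of these unions.
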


\begin{proof}
The proof is the same componentwise argument with three radii.  The inequalities give $R/r_j\ge10$ for every $j$, so the translated unit-circle certificate places each oval inside its component amoeba.  Since $\A_{C_{R;r_1,r_2,r_3}}=\bigcup_{j=1}^3\A_{R,r_j}$, all three ovals lie in the interior of the union.  They are logarithmic critical-value components and are pairwise disjoint by injectivity of the coordinatewise logarithm on the positive quadrant.  Hence the boundary is strictly contained in the complete contour.
\end{proof}

For $R=10$, $r_1=2/5$, $r_2=7/10$, and $r_3=1$, the defining polynomial is
$$
\begin{aligned}
F_3(z,w)={}&\left((z-10)^2+(w-10)^2-\frac4{25}\right)\\
&\cdot\left((z-10)^2+(w-10)^2-\frac{49}{100}\right)
\left((z-10)^2+(w-10)^2-1\right).
\end{aligned}
$$
The effective unit centers are $25$, $100/7$, and $10$, so Proposition~\ref{prop:three-circle-ovals} applies.  Figure~\ref{fig:three-circle-subsection} uses the same non-flat blue texture and dark-red contour convention as the two-circle figure.  The three small nested red ovals are internal.

\begin{figure}[h]
\centering
\includegraphics[width=.32\textwidth]{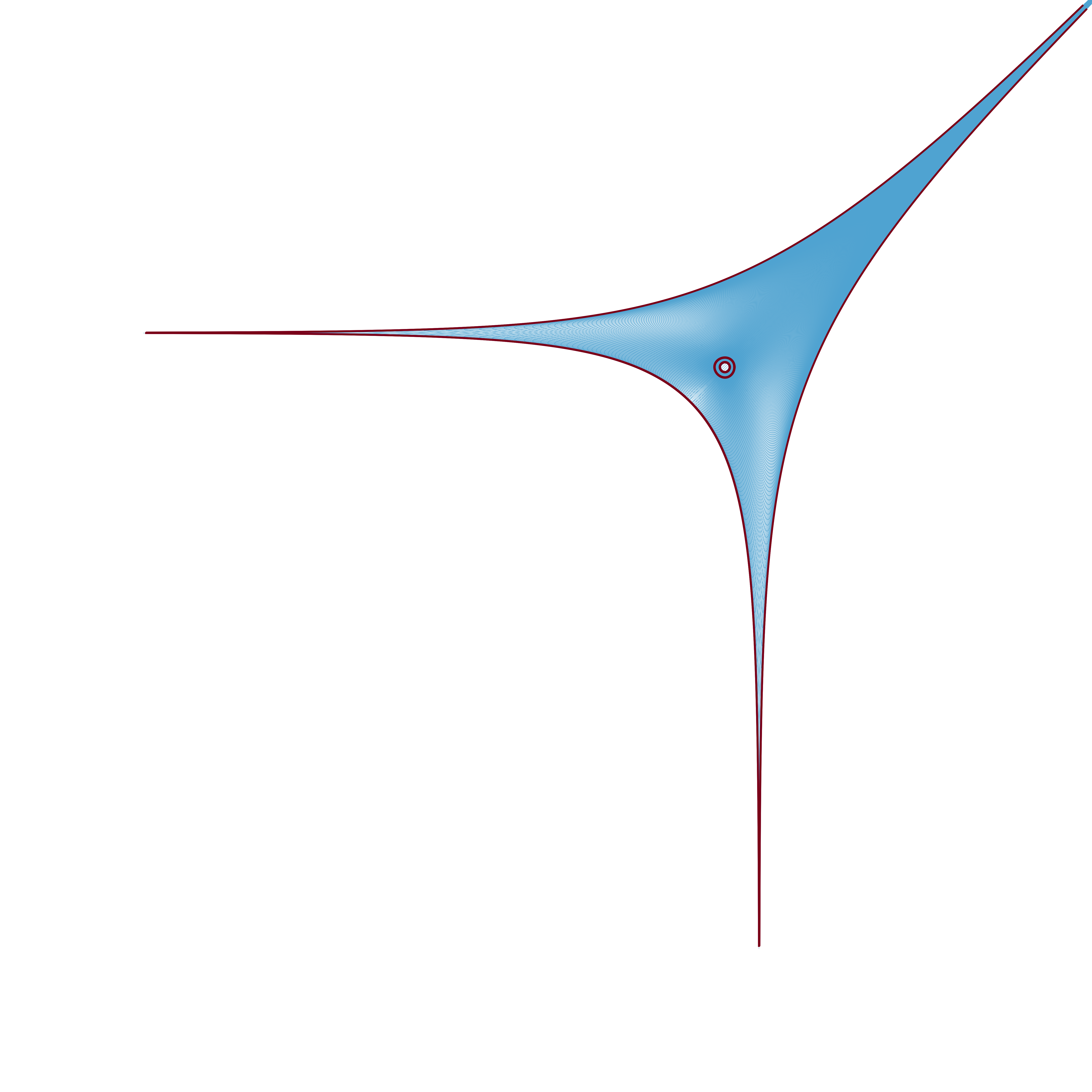}\qquad 
\includegraphics[width=.32\textwidth]{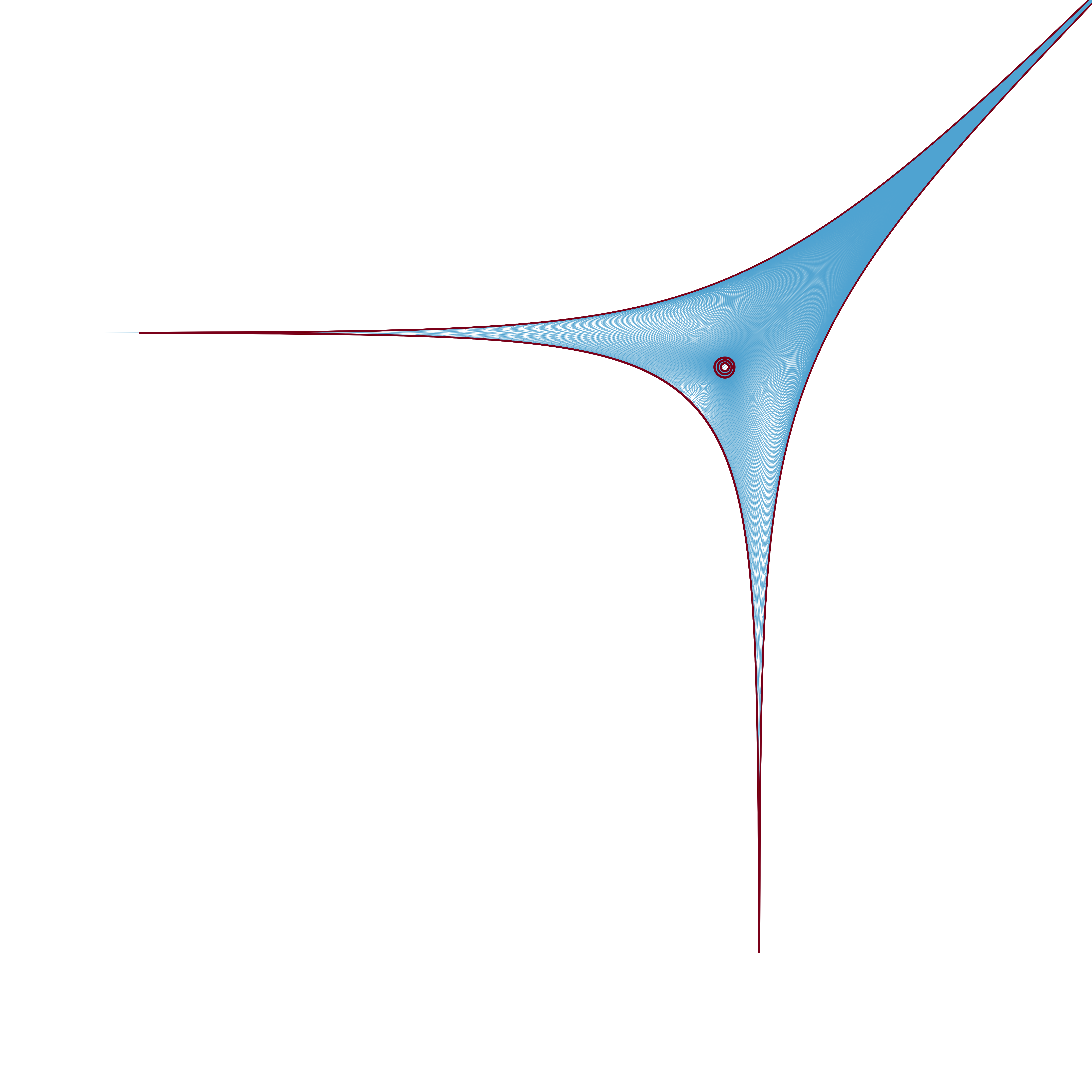}
\caption{On the left the two-circle amoeba for $R=10$, $r_1=1/2$, and $r_2=1$, shown with a non-flat blue texture, together with its sampled complete contour in dark red.  The two compact nested red ovals lie in the interior of the amoeba.
On the right the three-circle amoeba for $R=10$ and radii $2/5$, $7/10$, and $1$, shown as a non-flat blue line cloud, together with its sampled complete contour in dark red.  The three compact nested red ovals lie in the amoeba interior.}
\label{fig:three-circle-subsection}
\end{figure}

\paragraph{Smoothness status of the complete union contours.}
Propositions~\ref{prop:two-circle-ovals} and \ref{prop:three-circle-ovals} prove algebraic smoothness, componentwise smooth embedded internal ovals, and strict boundary--contour inclusion.  They do not by themselves prove that the complete contour of either disconnected union is globally embedded.  Two points on different conic components can have identical coordinate moduli even though the points and the algebraic components are disjoint.

For two radii $r_j\ne r_k$, one must introduce critical lifts $(z,w)\in C_{R,r_j}$ and $(Z,W)\in C_{R,r_k}$ and impose $|z|^2=|Z|^2$ and $|w|^2=|W|^2$.  In real coordinates, this produces the two real equations for each conic, the two real logarithmic criticality equations, and the two equal-modulus equations.  The system must be saturated by $zwZW$ and analyzed in all sixteen ordered pairs of the four toric chart types.  A nonzero determinant of the two logarithmic image tangents certifies a transverse contour crossing; when the determinant vanishes, higher logarithmic jets are needed. %


\subsection{A connected noncircular elliptic perturbation}

For real $\delta$, put
$$f_{R,\delta}=(z-R)^2+(w-R)^2+\delta(z-R)(w-R)-1, \quad \textrm{and} \,\, C_{R,\delta}=V(f_{R,\delta})$$

\begin{theorem}\label{thm:ellipse}
For every $R\ge10$ and $0<|\delta|\le1/10$, the curve $C_{R,\delta}$ is smooth, connected, irreducible, and noncircular.  Its complete contour is a smooth embedded real-analytic one-manifold equal to the disjoint union of its amoeba boundary and the internal logarithmic image of its positive real ellipse.
\end{theorem}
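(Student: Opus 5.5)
The plan is to treat $C_{R,\delta}$ as a small perturbation of the translated circle $C_R$ of Theorem~\ref{thm:circle}, reusing its proof step by step and checking that every certified inequality has enough slack to absorb $|\delta|\le1/10$.

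\textbf{Algebraic properties.}
\begin{itemize}
\item Write $u=z-R$ and $v=w-R$. The quadratic form $u^2+v^2+\delta uv$ has discriminant $\delta^2-4<0$, so it is positive definite.
\item The gradient $(2u+\delta v,\,2v+\delta u)$ therefore vanishes only at $u=v=0$, which is not on the curve. Hence $C_{R,\delta}$ is smooth.
\item The projective closure is a nondegenerate conic, so it is irreducible. Removing finitely many toric boundary points leaves it connected.
\item It is noncircular because the $uv$ term survives with $\delta\neq0$.
\item Diagonalizing with $s=(u+v)/\sqrt2$ and $t=(u-v)/\sqrt2$ gives $(1+\delta/2)s^2+(1-\delta/2)t^2=1$. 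This yields the rational parametrization $s=\lambda(\zeta+\zeta^{-1})/2$, $t=\mu(\zeta-\zeta^{-1})/(2i)$ with $\lambda=(1+\delta/2)^{-1/2}$ and $\mu=(1-\delta/2)^{-1/2}$.
\item The positive real ellipse corresponds to $|\zeta|=1$. It lies in the positive quadrant since $\lambda,\mu<1.03$ and $R\ge10$.
\item Its logarithmic image $\Gamma_{R,\delta}$ is an embedded oval, because the coordinatewise logarithm is injective there.
\item It lies in the contour because the logarithmic Gauss map $[z(2u+\delta v):w(2v+\delta u)]$ is real on real points.
\end{itemize}

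\textbf{Internality.} I repeat the rescaling $\zeta=R\xi$ from Theorem~\ref{thm:circle}. After subtracting $(\log R,\log R)$, the logarithmic map converges in $C^1$ on compact sets to $\tfrac12\log$ of explicit quadratic forms $A_\delta$ and $B_\delta$ in $(\alpha,\beta)$. These depend polynomially on $\delta$ through $\lambda$ and $\mu$, and they reduce to $A$ and $B$ at $\delta=0$.

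On the box $[-5/2,-3/2]^2$, the Jacobian of $(A_\delta,B_\delta)$ differs from $1+(\alpha+\beta)/2$ by $O(|\delta|)$. An explicit bound there, for instance below $1/10$, keeps the determinant separated from zero once combined with the $9/100$ error term, since the circle case had margin $41/100$. The same margin argument shows that the image of $[-9/4,-7/4]^2$ still contains $[-1/8,1/8]^2$.

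The real oval satisfies $|\log(z/R)|\le\log(1+1.03/10)<1/8$, and likewise for $w$. So a regular nonreal sheet covers a neighborhood of $\Gamma_{R,\delta}$, which gives $\Gamma_{R,\delta}\subset\operatorname{Int}\A_{C_{R,\delta}}$.

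\textbf{The remaining contour.} In $\zeta=u'+iv'$ the criticality equation $\operatorname{Im}\bigl(z(2u+\delta v)\overline{w(2v+\delta u)}\bigr)=0$ should again factor as $(|\zeta|^2-1)H_{R,\delta}=0$. The reason is that real points make the Gauss map real, so $|\zeta|^2-1$ divides it; I would verify this by symmetry under $\zeta\mapsto\bar\zeta^{-1}$.

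For the factor $H_{R,\delta}$, I rerun the compactified interval evaluation of Theorem~\ref{thm:circle}, now over the product of the compactified $\zeta$-domain with $\delta\in[-1/10,1/10]$. The circle case had the normalized lower bounds
\begin{itemize}
\item $1/50$ for the critical-source gradient,
\item $1/200$ for the contour tangent,
\item $1/500$ for the off-diagonal two-lift separation.
\end{itemize}
These are strictly positive. So it suffices either to bound the $\delta$-derivatives of the relevant polynomials on the compact domain, or to subdivide the $\delta$-interval and certify each subinterval.

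This is the main obstacle. The off-diagonal two-lift separation, saturated by $zwZW$ and treated in the reciprocal charts at the toric ends, is the step with the smallest margin. It may need a finer interval subdivision rather than a crude Lipschitz bound.

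Once the three bounds hold, the remote critical components map to disjoint embedded definite folds. They are also disjoint from $\Gamma_{R,\delta}$, since $\Gamma_{R,\delta}$ lies in a small neighborhood of $(\log R,\log R)$ far from the remote folds. Because these folds are definite, with no other sheet covering their omitted side, they exhaust $\partial\A_{C_{R,\delta}}$. This gives the disjoint decomposition $\Cont=\partial\A\sqcup\Gamma_{R,\delta}$.
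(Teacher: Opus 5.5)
Your proposal follows the paper's route. You get the algebraic facts from positive definiteness of $u^2+v^2+\delta uv$, keep the real ellipse inside the square of radius $1/8$, and carry the circle's remote-sheet Jacobian, critical-source, contour-tangent, two-lift and toric-end certificates over uniformly to $R\ge10$, $|\delta|\le1/10$. The paper does the same with the rational parametrization $X=(t^2-1)/D$, $Y=-(\delta t^2+2t)/D$, and simply quotes the bound $7/25$ and the margin $4-\delta^2\ge399/100$ where you leave the margins to be certified.

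Two small corrections are needed.

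\begin{itemize}
\item Your limiting forms are not $A,B$ at $\delta=0$. At $\delta=0$ your diagonalized parameter is related to the circle's by $\zeta_{\mathrm{circle}}=e^{i\pi/4}/\zeta$. Under your rescaling $\zeta=R\xi$, the limiting Jacobian is $-(1+\alpha/\sqrt2)$, and the relevant preimage of $(\log R,\log R)$ sits near $\xi=-2\sqrt2$, which lies outside both boxes. So you must either precompose by this M\"obius map (i.e.\ rescale as $\zeta=e^{i\pi/4}/(R\xi)$) or transport the boxes $[-5/2,-3/2]^2$ and $[-9/4,-7/4]^2$ before reusing the $41/100$ margin.
\item The worst case in your $1/8$ estimate is $-\log(1-0.103)\approx0.109$, not $\log(1.103)$. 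This is still below $1/8$, so the conclusion stands.
\end{itemize}
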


\begin{proof}
Put $X=z-R$ and $Y=w-R$.  The quadratic form $X^2+Y^2+\delta XY$ has eigenvalues $1\pm\delta/2$, which lie in $[19/20,21/20]$.  Its real level set is therefore a noncircular ellipse.  The gradient equations force $X=Y=0$ when $|\delta|<2$, which is incompatible with the level equation.  The projective conic is nonsingular, hence irreducible, and its torus part is connected.

A rational parametrization is obtained from $D(t)=t^2+\delta t+1$ by
$X=(t^2-1)/D$ and $Y=-(\delta t^2+2t)/D$.  The real projective parameter line produces the positive real ellipse and is logarithmically critical.  Its image is embedded.  The semiaxes are at most $\sqrt{20/19}<103/100$, so for $R\ge10$ the logarithmic ellipse remains inside the translated square of radius $1/8$ used in the proof of Theorem~\ref{thm:circle}.

The remote logarithmic Jacobian varies analytically with $\delta$.  Rational interval evaluation on $R\ge10$ and $|\delta|\le1/10$ gives the lower bound $7/25$.  Thus the regular remote sheet continues to cover a neighborhood of the real logarithmic ellipse.  The compactified critical-source, contour-tangent, two-lift, and toric-end bounds remain strict on this closed parameter region because $4-\delta^2\ge399/100$.  No new critical component, ramification, self-intersection, or end collision appears.  The nonreal folds remain boundary-producing, while the real logarithmic ellipse remains internal.
\end{proof}

Figure~\ref{fig:ellipse} uses $(R,\delta)=(10,1/10)$.

\begin{figure}[h]
\centering
\includegraphics[width=.32\textwidth]{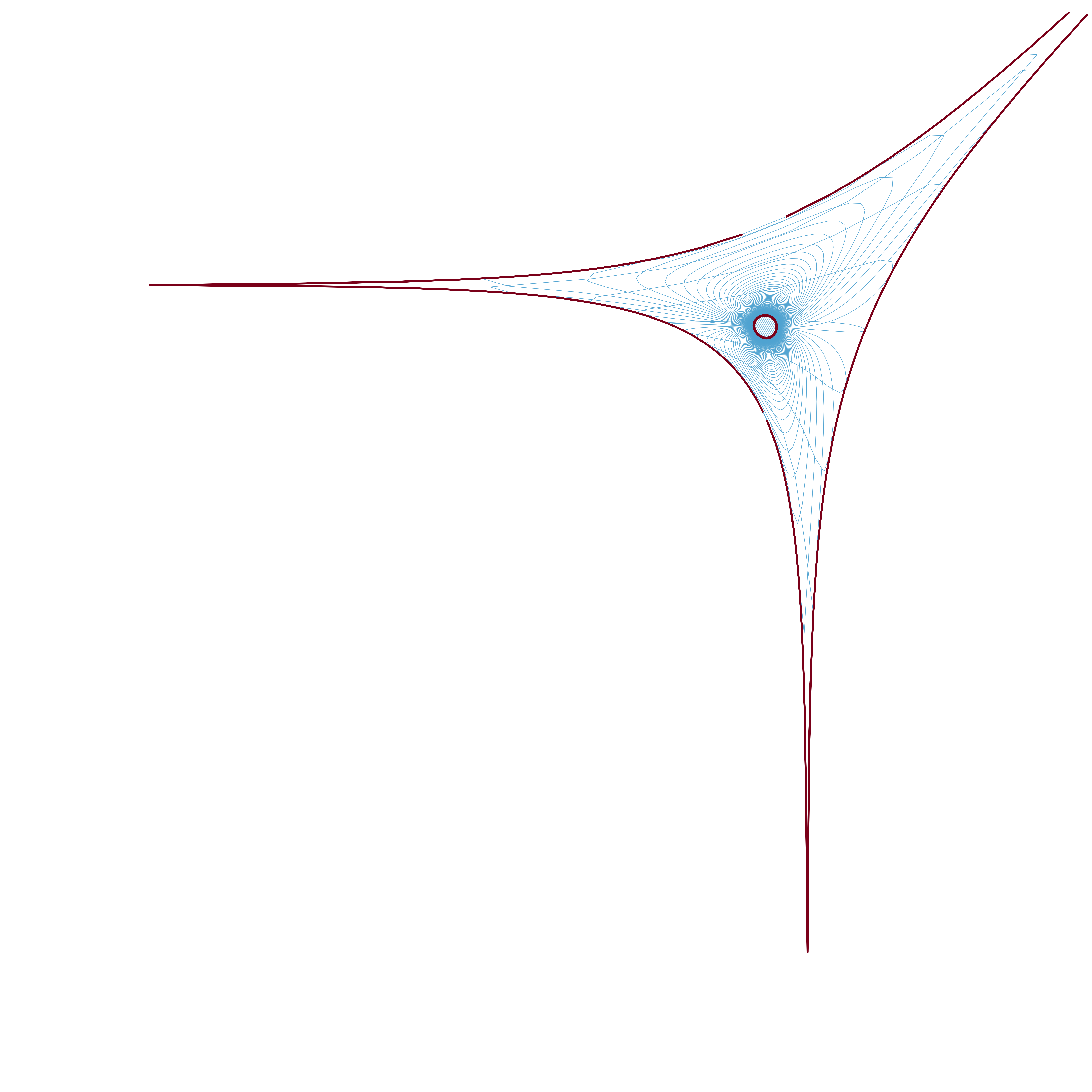}
\caption{The amoeba of the noncircular irreducible conic in light blue and its complete contour in dark red.}
\label{fig:ellipse}
\end{figure}

\subsection{A monomially transformed irreducible conic}

The connected examples remain valid under unimodular monomial automorphisms.  Consider
$$
K_R=V((z-R)^2+(zw-R)^2-1).
$$

\begin{corollary}\label{cor:monomial}
For $R\ge10$, the curve $K_R$ is smooth, connected, and irreducible, and it has the strict boundary--contour property.
\end{corollary}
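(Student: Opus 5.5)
The plan is to realize $K_R$ as the image of $C_R$ under a unimodular monomial automorphism of the torus and then transport Theorem~\ref{thm:circle} along it. Let $\Phi(z,w)=(z,z^{-1}w)$. It is an automorphism of $(\C^*)^2$ with inverse $(z,w)\mapsto(z,zw)$. Since $f_R(z,zw)=(z-R)^2+(zw-R)^2-1$, where $f_R=(z-R)^2+(w-R)^2-1$, we get $K_R=\Phi(C_R)$. Moreover $\Log\circ\Phi=L\circ\Log$ with $L(x,y)=(x,y-x)$, an element of $GL_2(\mathbb Z)$ and in particular a linear diffeomorphism of $\R^2$.

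The algebraic assertions come first. $\Phi$ is a biholomorphism of the torus. Hence $K_R$ is smooth, being the biholomorphic image of the smooth curve $C_R$, and connected, being the image of a connected set. It is irreducible, since a factorization of the defining polynomial of $K_R$ would pull back under $\Phi^{-1}$ to a factorization of $f_R$ in the Laurent ring. Any factor that becomes a unit must be a monomial, and a monomial factor does not change the zero set in the torus.

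The contour statement needs more care, and it is where I expect the main obstacle. Rather than redoing the Gauss-map computation, I would argue intrinsically. Criticality of $\Log|_C$ at a point is a property of the differential, and $d(\Log|_{K_R})\circ d\Phi=L\circ d(\Log|_{C_R})$. It follows that $\Crit(\Log|_{K_R})=\Phi(\Crit(\Log|_{C_R}))$. As a cross-check, the logarithmic Gauss map transforms by the real projective matrix induced by $L^{T}$, which preserves $\mathbb{RP}^1$. Consequently
\[
\Cont_{K_R}=L(\Cont_{C_R}),\qquad \A_{K_R}=L(\A_{C_R}),
\]
and, because $L$ is a homeomorphism of $\R^2$, also $\partial\A_{K_R}=L(\partial\A_{C_R})$ and $\operatorname{Int}\A_{K_R}=L(\operatorname{Int}\A_{C_R})$. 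Applying $L$ to the decomposition of Theorem~\ref{thm:circle} gives
\[
\Cont_{K_R}=\partial\A_{K_R}\sqcup L(\Gamma_R),\qquad L(\Gamma_R)\subset\operatorname{Int}(\A_{K_R}).
\]
A linear diffeomorphism maps smooth embedded real-analytic one-manifolds to smooth embedded real-analytic one-manifolds, so the complete contour of $K_R$ is smooth and embedded. Since $L(\Gamma_R)$ is nonempty and disjoint from the boundary, we get $\partial\A_{K_R}\subsetneq\Cont_{K_R}$, which is exactly Definition~\ref{def:strict}.

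The only delicate point is to make sure that everything in Theorem~\ref{thm:circle} was stated within $(\C^*)^2$ and $\R^2$. The toric-end separation for $C_R$ was checked in reciprocal charts adapted to the Newton polygon of $C_R$, while $K_R$ has a different Newton polygon. The conclusion is nevertheless purely about subsets of $\R^2$, and $L$ is a global homeomorphism of $\R^2$, so no new end analysis is needed.
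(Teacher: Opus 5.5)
Your proposal is correct and follows essentially the same route as the paper. The paper uses the inverse automorphism $(z,w)\mapsto(z,zw)$, which carries $K_R$ onto $C_R$ and induces $(x,y)\mapsto(x,x+y)$ on logarithmic space, and then transports all the conclusions of Theorem~\ref{thm:circle} by this linear diffeomorphism; your differential identity for criticality and the Gauss-map cross-check make explicit what the paper states in one line.
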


\begin{proof}
The torus automorphism $\Phi(z,w)=(z,zw)$ has inverse $(u,v)\mapsto(u,v/u)$.  It identifies $K_R$ with $C_R$.  In logarithmic coordinates it induces the invertible linear map $(x,y)\mapsto(x,x+y)$.  Consequently the amoeba, contour, boundary, smoothness, embeddedness, and strict inclusion are transported by a linear diffeomorphism.  The displayed equation is not an affine circle or ellipse in the variables $(z,w)$, although it is torically equivalent to the circle.
\end{proof}

\begin{figure}[h]
\centering
\includegraphics[width=.28\textwidth]{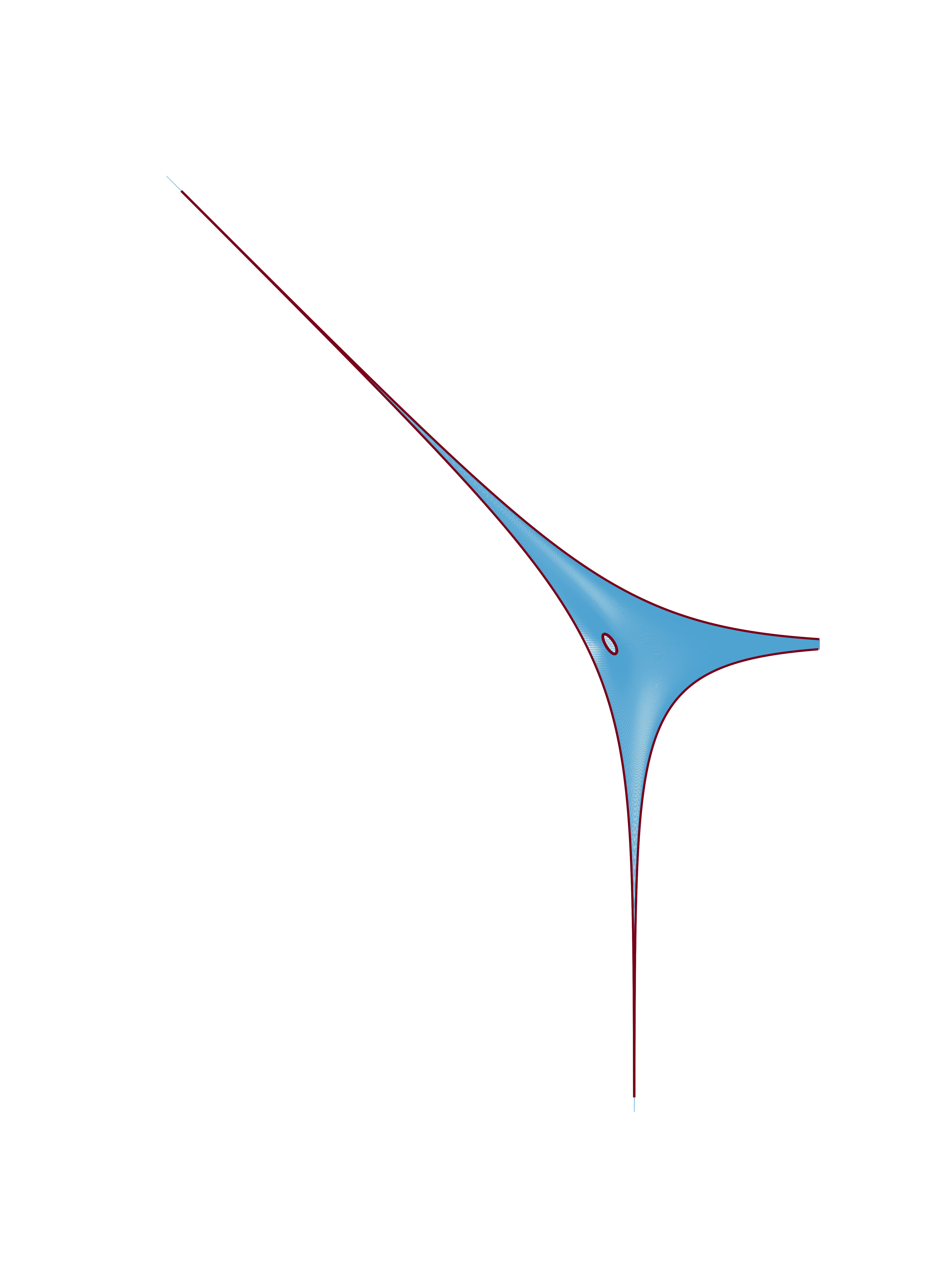} 
\caption{The amoeba and complete contour of the monomially transformed irreducible conic $K_{10}$.}
\label{fig:monomial}
\end{figure}

 \section{A genus-one curve}

The level curve
$$
C^{(1)}=\{z+z^{-1}+w+w^{-1}=5\}\subset(\C^*)^2
$$
provides a positive-genus control.  Clearing denominators gives $H=z^2w+zw^2+z+w-5zw$, whose Newton polygon is the diamond with vertices $(0,1),(1,0),(2,1),(1,2)$.  The point $(1,1)$ is its unique interior lattice point.

\begin{theorem}\label{thm:genus-one-control}%
The curve $C^{(1)}$ is smooth, irreducible, Newton nondegenerate, and of genus one.  Its logarithmic critical locus and complete contour are smooth, the contour is embedded, and
$\Cont_{C^{(1)}}=\partial\A_{C^{(1)}}$.
It is not monomially equivalent to an affine circle or ellipse.
\end{theorem}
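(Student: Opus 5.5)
Most of the work is already done in the proof of Theorem~\ref{thm:levels} with $N=1$ and $c=5$. Smoothness follows as there: a singular point needs $z,w\in\{\pm1\}$, which forces $c\in\{-4,0,4\}\neq 5$. Irreducibility follows because the discriminant $(z^2-7z+1)(z^2-3z+1)$ has four distinct roots and so is not a square. For Newton nondegeneracy I check the four edges of the diamond separately. Each edge polynomial is a binomial, for instance $z^2w+zw^2=zw(z+w)$ or $z+w$, and a binomial has no singular zeros in the torus. The genus then comes from Khovanskii's formula: the diamond has exactly one interior lattice point, $(1,1)$, so the genus is $1$. Equivalently, $C^{(1)}$ is a double cover of $\mathbb P^1_z$ branched at the four roots of the discriminant.

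For the contour I use the exact description from Theorem~\ref{thm:levels}. Write $A=2\cosh x$ and $B=2\cosh y$. The amoeba is $\{|A-B|\le 5\le A+B\}$, and the complete contour is the union of the three level curves $A+B=5$, $A-B=5$ and $B-A=5$. Each is a regular embedded curve because the gradients vanish only at the origin. I would also confirm that these three curves exhaust the critical-value set. Write $\Crit$ via $\gamma=[z-z^{-1}:w-w^{-1}]$. On a torus $|z|=e^x$, $|w|=e^y$, the set $\{z+z^{-1}:\arg z\}$ is the ellipse with semiaxes $2\cosh x$ and $2\sinh|x|$. A critical point is exactly a tangency between the two ellipses $\{z+z^{-1}\}$ and $\{5-w-w^{-1}\}$. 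These ellipses are confocal, with foci $\pm2$ and $5\pm2$, so the tangencies occur only at real points, where the extremal sums $\pm A\pm B=5$ are attained. The sign type $-A-B$ cannot equal $5$. The three curves are pairwise disjoint: $A+B=5$ and $A-B=5$ would force $B=0$, and $A-B=5$ and $B-A=5$ are incompatible. Smoothness of the critical locus $S_f$ itself comes from the fold being nondegenerate, which I would read off from the nonvanishing of the curvature difference of the two confocal ellipses at the tangency.

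The step I expect to be the main obstacle is equality with the boundary, that is, showing that every contour point is a boundary point. For this I would use that the amoeba criterion is an interval condition and not the union of several sheets as in Theorem~\ref{thm:levels}. A point with $A+B=5$ is on the boundary, because moving toward the origin decreases $A+B$ below $5$ and leaves the amoeba. Likewise a point with $A-B=5$ is on the boundary, because moving in the direction that increases $A-B$ leaves the amoeba. Each gradient is nonzero, so every contour point has amoeba complement arbitrarily near it, which gives $\Cont=\partial\A$. The remaining issue is to check that the amoeba criterion is exactly $|A-B|\le5\le A+B$. This holds by the confocal-ellipse picture: the sumset of the two ellipses fills a connected region, since both ellipses are convex curves, so their Minkowski sum of curves covers the annular region between the extremal sums.

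Finally, a monomial transformation preserves the lattice area and interior point count of the Newton polygon. A circle or ellipse has Newton triangle $\Delta_2$, whose Newton polygon has no interior points and whose curve has genus $0$. Since $C^{(1)}$ has genus $1$ and the genus is a birational invariant, $C^{(1)}$ is not monomially equivalent to an affine circle or ellipse.
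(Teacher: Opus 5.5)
Apart from the critical-locus step, your outline follows the paper: binomial edge polynomials, Khovanskii's formula, the three level sets, boundary points via $|A-B|\le5\le A+B$, and a genus or lattice invariant for non-equivalence. Your discriminant proof of irreducibility is a fine substitute for the paper's Newton-polygon splitting argument. The gap is the claim that all critical points are real.

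The ellipses $E_x=\{z+z^{-1}:|z|=e^x\}$ and $5-E_y$ have foci $\pm2$ and $3,7$, so they are not confocal. Moreover, two distinct confocal ellipses are disjoint, so confocality could never be the reason tangencies lie on $\R$. Aligned axes and symmetry about $\R$ do not suffice either: $u^2/4+v^2=1$ and $u^2+v^2=1$ are tangent at $(0,\pm1)$. Citing Theorem~\ref{thm:levels} gives you the contour as a set. But the statement also asserts that the critical locus $S_f$ itself is smooth. Your only argument for that is an uncomputed curvature comparison built on the same incorrect picture.

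The repair is the paper's algebraic argument.
With $p=z+z^{-1}$ and $q=w+w^{-1}$ one has $p+q=5$, $(z-z^{-1})^2=p^2-4$ and $(w-w^{-1})^2=q^2-4$. So criticality gives $\lambda=(p^2-4)/(q^2-4)\in[0,\infty]$. Writing $p=\frac52+\tau$ turns this into $(1-\lambda)\tau^2+5(1+\lambda)\tau+\frac94(1-\lambda)=0$. Its discriminant $25(1+\lambda)^2-9(1-\lambda)^2$ is positive for $\lambda\ge0$, and the cases $\lambda=1,\infty$ are immediate, so $p,q\in\R$.

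Each of $z,w$ is then real or unimodular. Suppose $|z|=1$ with $z\notin\R$. Then $q=5-p\in(3,7)$ forces $w\in\R\setminus\{\pm1\}$, so $z-z^{-1}\in i\R^*$ while $w-w^{-1}\in\R^*$, contradicting criticality. The case of $w$ is symmetric. Hence $S_f=C\cap(\R^*)^2$, which is a smooth one-manifold simply because $C$ is nonsingular and defined over $\R$. No fold or curvature computation is needed.

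Your ellipse picture can also be salvaged. Eliminating $v^2$ gives $\psi_1-\psi_2=(t-s)u^2-10tu+25t+4(s-t)/((1-s)(1-t))$ with $s=\tanh^2x$ and $t=\tanh^2y$. A tangency off $\R$ needs a multiple root. But the discriminant is $100st+16(s-t)^2/((1-s)(1-t))>0$, and for $s=t\neq0$ the expression is linear with nonzero slope.

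Finally, your convexity remark at best proves $\{|A-B|\le5\le A+B\}\subseteq\A$. The reverse inclusion is what makes the branches $\pm(A-B)=5$ boundary. You should take it from Theorem~\ref{thm:levels}, as the paper does. Alternatively, observe that for $A-B>5$ the concave quadratic $\psi_1-\psi_2$ is positive on the whole $u$-range of $5-E_y$.
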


\begin{proof}
The logarithmic derivatives are $z-z^{-1}$ and $w-w^{-1}$.  A singular point would have $z,w\in\{1,-1\}$, but the level would then be $-4$, $0$, or $4$, never $5$.  Every face polynomial is a primitive binomial, so Newton nondegeneracy follows.  The toric genus formula gives genus one.  Irreducibility follows because a factorization would split the Newton polygon and force intersecting compactified components, contradicting smoothness.

Put $p=z+z^{-1}$ and $q=w+w^{-1}$.  On the curve $p+q=5$.  The reality condition for the logarithmic Gauss ratio forces $p,q$ to be real and then forces $z,w$ to be real; otherwise one logarithmic derivative would be real and the other purely imaginary.  Hence the critical locus is exactly the real part of the curve.  Its contour consists of the three regular level sets
$$
2\cosh x+2\cosh y=5,\qquad
2\cosh x-2\cosh y=5,\qquad
2\cosh y-2\cosh x=5.
$$
They are smooth, embedded, and pairwise disjoint.  The exact amoeba criterion is $|2\cosh x-2\cosh y|\le5\le2\cosh x+2\cosh y$, so all three level sets form the boundary.  The unique interior lattice point is invariant under integral affine transformations of the exponent lattice, whereas the Newton triangle of an affine conic has none.  Thus no allowed monomial equivalence identifies this curve with a circle or ellipse.  This total-real logarithmic behavior is the genus-one simple-Harnack situation described in \cite{MR2001,KOS2006}.
\end{proof}

\begin{figure}[h]
\centering
\includegraphics[width=.32\textwidth]{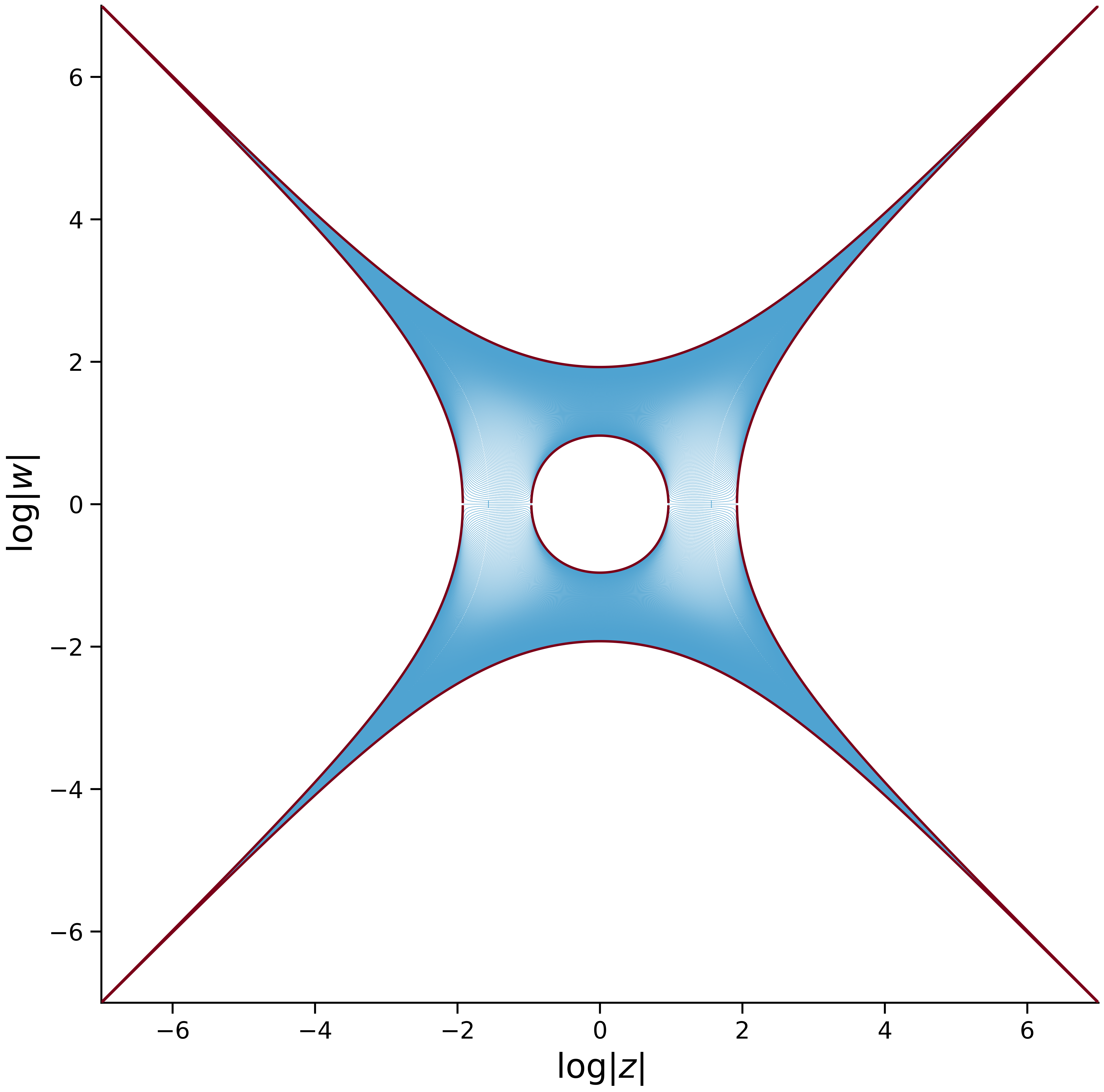}
\caption{Genus-one amoeba in light blue and its complete boundary-only contour in dark red.}
\label{fig:genus-one-control}
\end{figure}


\section{Case of irreducible curves of higher genus}

The amoeba of an algebraic curve records the moduli of its torus coordinates, while its contour records the critical values of the logarithmic map.  These two objects must be distinguished because every amoeba-boundary point is a critical value, whereas a critical value may remain inside the amoeba when another logarithmic sheet covers both sides of it.  The convexity theorem for amoeba complements and the order map are due to Forsberg, Passare, and Tsikh \cite{FPT}; the logarithmic Gauss-map description of the critical locus is part of the standard amoeba framework developed by Mikhalkin and extended in higher codimension by Madani and Nisse \cite{Mikhalkin,MadaniNisse}.  The construction below gives a complete exact instance of this boundary--contour distinction.

Consider the Laurent polynomial $F(z,w)=(z-10)^2+(zw-10)^2-1$.  Expanded in the standard coordinates, it is $F(z,w)=z^2w^2-20zw+z^2-20z+199$.  Thus the curve considered below is
$$
C=V_F=\{(z,w)\in(\C^*)^2:F(z,w)=0\}.
$$
This is not a circle or an ellipse in the affine $(z,w)$-plane: it is a plane quartic because its total degree is four, and its defining polynomial contains the term $z^2w^2$.  The curve is nevertheless accessible to exact analysis because the monomial map $T:(\C^*)^2\longrightarrow(\C^*)^2$ given by $T(z,w)=(x,y)=(z,zw)$ is an algebraic automorphism.  Its inverse is $T^{-1}(x,y)=(x,y/x)$.

The amoeba of $C$ is $\A_F=\Log(C)$, where $\Log(z,w)=(\log|z|,\log|w|)$.  Its complete logarithmic contour is $\mathcal C_F=\Log(\Crit(\Log|_C))$.  The word complete is important: $\mathcal C_F$ contains every critical value, including critical values lying in the interior of $\A_F$.

\begin{theorem}\label{thm:main}
The curve $C$ is smooth, connected and irreducible.  Its logarithmic critical locus is smooth, its complete contour $\mathcal C_F$ is a smooth embedded real-analytic one-manifold, and $\partial\A_F$ is strictly contained in $\mathcal C_F$.  More precisely, $\mathcal C_F$ is the disjoint union of the amoeba boundary and a compact internal oval.
\end{theorem}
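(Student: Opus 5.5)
The plan is to reduce Theorem~\ref{thm:main} entirely to Theorem~\ref{thm:circle} with $R=10$, via the monomial automorphism $T(z,w)=(z,zw)$, exactly as in Corollary~\ref{cor:monomial}. First, observe that $F=f\circ T$, where $f(x,y)=(x-10)^2+(y-10)^2-1$. Since $T$ is an algebraic automorphism of $(\C^*)^2$ with inverse $T^{-1}(x,y)=(x,y/x)$, it restricts to a biholomorphism $C=V_F\to C_{10}$. Smoothness, connectedness, and irreducibility of $C$ are therefore inherited from Theorem~\ref{thm:circle}. For irreducibility one notes that a factorization of $F$ in the Laurent ring would push forward under the ring automorphism $T^*$ to a factorization of $f$ (up to units).

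Next, transport the logarithmic geometry. We have $\Log\circ T=L\circ\Log$, where $L(X,Y)=(X,X+Y)$ is an invertible integral linear map. Hence $\Log|_C=L^{-1}\circ\Log|_{C_{10}}\circ T|_C$. Because $T|_C$ is a diffeomorphism and $L$ is a linear isomorphism, rank is preserved, so
$$
\Crit(\Log|_C)=T^{-1}\bigl(\Crit(\Log|_{C_{10}})\bigr),\qquad \mathcal C_F=L^{-1}(\Cont_{C_{10}}),\qquad \A_F=L^{-1}(\A_{C_{10}}).
$$
One can cross-check this with the logarithmic Gauss map. Writing $x=z$ and $y=zw$, one finds $zF_z=xf_x+yf_y$ and $wF_w=yf_y$, so $\gamma_F$ is $\gamma_f\circ T$ composed with a real projective linear map of $\mathbb P^1$. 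That map preserves $\mathbb P^1(\R)$, which confirms that the critical loci correspond.

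Since $L^{-1}$ is a homeomorphism of $\R^2$, it carries the boundary to the boundary and the interior to the interior. It also carries smooth embedded real-analytic one-manifolds to the same. The decomposition $\Cont_{C_{10}}=\partial\A_{C_{10}}\sqcup\Gamma_{10}$ with $\Gamma_{10}\subset\operatorname{Int}(\A_{C_{10}})$ therefore becomes
$$
\mathcal C_F=\partial\A_F\sqcup L^{-1}(\Gamma_{10}),
$$
where $L^{-1}(\Gamma_{10})=\{(\log(10+\cos\theta),\log((10+\sin\theta)/(10+\cos\theta)))\}$ is a compact oval in $\operatorname{Int}(\A_F)$. Smoothness of the critical locus follows from the same fact for $C_{10}$: Theorem~\ref{thm:circle} gives it through the nonvanishing of the critical-source gradient, and this is pulled back by the biholomorphism $T$. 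Strict inclusion is immediate because the oval is nonempty.

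There is no real obstacle beyond Theorem~\ref{thm:circle} itself. The only step needing care is checking that every ingredient of that theorem is functorial under $(T,L)$. The delicate ones are the claims of smoothness of the critical source and of embeddedness at the toric ends. Ends of $C$ correspond bijectively to ends of $C_{10}$, and $L$ is proper, so no new end collisions can arise. I would state this explicitly rather than redo the interval estimates.
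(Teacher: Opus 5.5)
Your proposal is correct and follows the paper's own route. The paper likewise writes $F=G\circ T$ and uses $\Log\circ T=L\circ\Log$ to obtain $\Crit(\Log|_C)=T^{-1}(\Crit(\Log|_{V_G}))$, $\A_F=L^{-1}(\A_G)$ and $\mathcal C_F=L^{-1}(\mathcal C_G)$, transporting everything from Theorem~\ref{thm:circle} with $R=10$; it only adds optional direct checks that your functorial argument does not need, namely a rational parametrization of the critical locus with sum-of-squares certificates for immersion and two-lift separation, and an interval-certified regular nonreal point over $(\log 11,\log 10)$.
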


The mechanism in Theorem~\ref{thm:main} differs visibly from the original circle and ellipse equations.  It is also useful to record its limitation: the quartic is monomially equivalent inside the algebraic torus to a translated circle.  Consequently the example is a higher-degree plane equation with a different Newton polygon, while the proof transfers the established contour geometry through a torus automorphism.  The final part of the article passes from this unimodular transformation to finite nonunimodular monomial coverings.  This produces a single family of positive-genus curves and replaces the three separate papers devoted to genera one, two, and three.  The genus computation uses the toric genus formula of Khovanskii \cite{Khovanskii1978}, while the critical-locus and contour arguments use the logarithmic Gauss-map framework of Mikhalkin and Madani--Nisse \cite{Mikhalkin,MadaniNisse}.

\subsection{Smoothness, connectedness and irreducibility}

Put $G(x,y)=(x-10)^2+(y-10)^2-1$.  The identity $F=G\circ T$ gives $C=T^{-1}(V_G)$.  The affine conic $V_G$ is nonsingular because $G_x=2(x-10)$ and $G_y=2(y-10)$ cannot vanish simultaneously on $G=0$.  Since $T$ is an algebraic isomorphism, $C$ is smooth.

The polynomial $G$ is irreducible over $\C$.  Indeed, a reducible polynomial of total degree two defining a nonsingular affine conic would be a product of two affine linear forms.  Their projective lines intersect, and the product would be singular at their intersection.  This contradicts the nonsingularity of the projective conic $X^2+Y^2-20XT-20YT+199T^2=0$.  The pullback by an automorphism of the Laurent polynomial ring preserves irreducibility.  Therefore $F$ is irreducible in $\C[z^{\pm1},w^{\pm1}]$, and $C$ is connected.

The same conclusion can be checked directly from the partial derivatives $F_w=2z(zw-10)$ and $F_z=2(z-10)+2w(zw-10)$.  If $F=F_z=F_w=0$ at a torus point, then $F_w=0$ and $z\ne0$ imply $zw=10$.  The equation $F_z=0$ then gives $z=10$, hence $w=1$, but $F(10,1)=-1$.  No singular torus point exists.

\subsection{The logarithmic linear conjugacy}

Write $(X,Y)=\Log(z,w)$ and $(U,V)=\Log(x,y)$.  Since $x=z$ and $y=zw$, one has $(U,V)=(X,X+Y)$.  Let $L:\R^2\to\R^2$ be $L(X,Y)=(X,X+Y)$.  Its inverse is $L^{-1}(U,V)=(U,V-U)$, and $\det L=1$.  The identity
$$
\Log\circ T=L\circ\Log
$$
implies $\A_F=L^{-1}(\A_G)$ and $\partial\A_F=L^{-1}(\partial\A_G)$.

Differentiation gives $d\Log_{T(p)}\circ dT_p=L\circ d\Log_p$.  Both $dT_p$ and $L$ are invertible.  Hence $p$ is critical for $\Log|_C$ precisely when $T(p)$ is critical for $\Log|_{V_G}$.  It follows that $\Crit(\Log|_C)=T^{-1}(\Crit(\Log|_{V_G}))$ and
$$
\mathcal C_F=L^{-1}(\mathcal C_G).
$$
Every regularity, embeddedness, disjointness and boundary-versus-interior assertion for the logarithmic contour of $V_G$ is therefore preserved by the invertible real linear map $L^{-1}$.

\subsection{Exact critical-value parametrization}

The conic $V_G$ has the rational parametrization $x(t)=10+(1-t^2)/(1+t^2)$ and $y(t)=10+2t/(1+t^2)$ for $t\in\mathbb P^1(\C)$.  The corresponding parametrization of $C$ is
$$
z(t)=\frac{11+9t^2}{1+t^2},\qquad
w(t)=\frac{10t^2+2t+10}{11+9t^2}.
$$
The logarithmic Gauss map of $G$ is $\gamma_G=[xG_x:yG_y]=[x(x-10):y(y-10)]$.  The equality between the logarithmic critical locus and $\gamma_G^{-1}(\mathbb P^1(\R))$ is the plane-curve logarithmic Gauss criterion \cite{Mikhalkin,MadaniNisse}.  Substitution of the rational parametrization gives
$$
\gamma_G(t)=\left[(11+9t^2)(1-t^2):2t(10t^2+2t+10)\right].
$$
The critical locus is $\gamma_G^{-1}(\mathbb P^1(\R))$.  If $\lambda\in\R$ denotes the affine Gauss value, every finite point of this locus is obtained from a root of
$$
H_\lambda(t)=(11+9t^2)(1-t^2)-2\lambda t(10t^2+2t+10)=0.
$$
The corresponding critical value for the quartic is
$$
\left(\log\left|\frac{11+9t^2}{1+t^2}\right|,
\log\left|\frac{10t^2+2t+10}{11+9t^2}\right|\right).
$$
These two equations generate the entire red contour in Figure~\ref{fig:amoeba}.  No branch is discarded.  Values at $\lambda=\infty$ are recovered from the roots of $2t(10t^2+2t+10)=0$, and the points $t=\pm i$ describe the toric ends.

For completeness, the absence of a contour cusp or transverse self-intersection can be verified without relying on the drawing.  Differentiate $H_\lambda(t)=0$ along every simple root and substitute the derivative into the two logarithmic coordinates above.  Clearing the nonzero factors $t$, $1+t^2$, $11+9t^2$ and $10t^2+2t+10$ reduces simultaneous vanishing of both coordinate derivatives to $81|t^2+1|^4+20|9t^2+11|^2+20|10t^2+2t+10|^2=0$.  The three summands are nonnegative and cannot vanish simultaneously.  Thus every critical-value branch is immersed.  Applying the same elimination to two parameters $t\ne s$ having equal logarithmic moduli reduces, after saturation by $t-s$ and by the toric factors, to the sum $|t-s|^2\{81|1+ts|^2+20|9ts+11|^2+20|10ts+t+s+10|^2\}=0$.  Hence two distinct critical branches have no finite common image.  The remaining limiting points belong to distinct toric directions.  The contour is therefore a smooth embedded real-analytic one-manifold.

\subsection{Why the inclusion is strict}

The real oval $x=10+\cos\theta$, $y=10+\sin\theta$ lies in the critical locus because its logarithmic Gauss ratio is real.  Its image on $C$ is
$$
\eta(\theta)=\left(\log(10+\cos\theta),
\log(10+\sin\theta)-\log(10+\cos\theta)\right).
$$
This is the small closed red oval visible in Figure~\ref{fig:amoeba}.  It is not an amoeba-boundary component.  It suffices to exhibit one regular point of the logarithmic map above one value of this oval.  Consider the value corresponding to the real point $(x,y)=(11,10)$.  Besides that critical preimage, the same logarithmic fiber contains a nonreal point of the form $x=11e^{i\alpha}$, $y=10e^{i\beta}$ with
$$
-1.568063<\alpha<-1.568061,\qquad 1.673469<\beta<1.673472.
$$
Substitution in $(x-10)^2+(y-10)^2=1$ isolates a unique solution in this rectangle.  At its midpoint the enclosure is $x=0.0300762-10.9999589i$ and $y=-1.0249363+9.9473366i$.  The Jacobian determinant of the real and imaginary parts of $G(11e^{i\alpha},10e^{i\beta})$ with respect to $(\alpha,\beta)$ is enclosed in $(96490,96493)$.  It is nonzero.  The inverse-function theorem then supplies an open neighborhood of $(\log11,\log10)$ contained in $\A_G$.

Thus the critical value $(\log11,\log10)$ lies in $\operatorname{Int}(\A_G)$.  The internal oval is connected and disjoint from the amoeba boundary, so its whole image lies in the interior.  Applying $L^{-1}$ shows that $(\log11,\log(10/11))$ is an interior point of $\A_F$ lying on $\mathcal C_F$.  Consequently $\partial\A_F\subsetneq\mathcal C_F$.

The blue color must occur on both sides of this internal oval.  An uncolored center would be a sampling artifact.  The Newton polygon computed below has no interior lattice point, so the order-map theorem also rules out a bounded complement component for this amoeba \cite{FPT,Rullgard}.

\subsection{Amoeba and complete contour}

In Figure~\ref{fig:amoeba}, dark blue represents the amoeba and dark red represents the complete critical-value set.  The red noncompact arcs form the amoeba boundary, while the small red oval is internal.  The image was computed from the exact rational parametrization, and the oval interior was filled explicitly so that numerical sampling does not create a false white component.

\begin{figure}[ht]
\centering
\includegraphics[width=.42\textwidth]{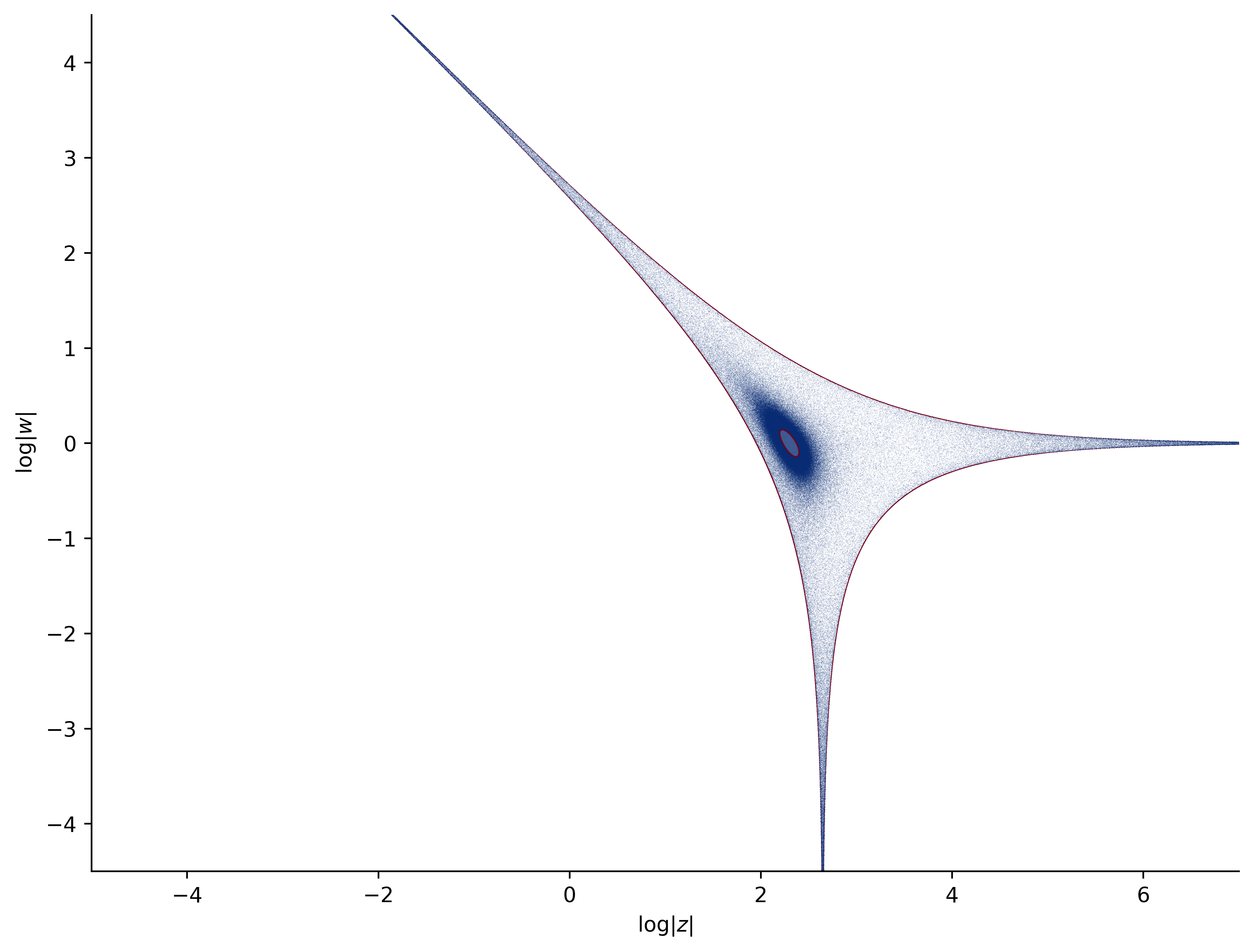}
\caption{The amoeba $\A_F$ in dark blue and its complete logarithmic contour $\mathcal C_F$ in dark red.}
\label{fig:amoeba}
\end{figure}

The exponent support is $\{(0,0),(1,0),(2,0),(1,1),(2,2)\}$.  Hence the Newton polygon is the triangle $\Newt(F)=\operatorname{conv}\{(0,0),(2,0),(2,2)\}$.  The coefficients attached to the five support points, in the displayed order, are $199,-20,1,-20,1$.

\begin{figure}[ht]
\centering
\begin{tikzpicture}[scale=1.5]
\fill[amoebablue!14] (0,0)--(2,0)--(2,2)--cycle;
\draw[amoebablue,very thick] (0,0)--(2,0)--(2,2)--cycle;
\foreach \x/\y/\a/\dx/\dy in {0/0/199/.06/.10,1/0/{-20}/.05/.10,2/0/1/.05/.10,1/1/{-20}/.07/.08,2/2/1/.05/.10}{
  \fill[contourred] (\x,\y) circle (1.7pt);
  \node[anchor=west] at (\x+\dx,\y+\dy) {$\a$};
}
\draw[->] (-.15,0)--(2.3,0) node[right] {$j$};
\draw[->] (0,-.15)--(0,2.3) node[above] {$k$};
\end{tikzpicture}
\caption{The Newton polygon of $F$, with every support point labeled by its coefficient.}
\label{fig:newton}
\end{figure}

The Euclidean area of this triangle is $2$, and Pick's formula shows that it has no interior lattice point.  Its three edges account for the three asymptotic directions of the amoeba.  The monomial transformation sends the Newton triangle of $G$ by the integral exponent map $(a,b)\mapsto(a+b,b)$, which explains the sheared triangular polygon in Figure~\ref{fig:newton}.

\section{A uniform construction in every positive genus}

The quartic above has geometric genus zero because it is obtained from the translated conic by a torus automorphism.  Positive genus is obtained by replacing the unimodular automorphism by a finite monomial covering.  For every integer $n\geq2$, define
$$
F_n(z,w)=(z^n-10)^2+(w-10)^2-1
=z^{2n}-20z^n+w^2-20w+199
$$
and write $C_n=V_{F_n}\subset(\C^*)^2$.  Let $\A_n=\Log(C_n)$, let $S_n=\Crit(\Log|_{C_n})$, and let $\mathcal C_n=\Log(S_n)$.

\begin{theorem}[Uniform all-genera construction]\label{thm:all-genera}
For every integer $n\geq2$, the curve $C_n$ is smooth, connected, irreducible, and Newton nondegenerate.  Its smooth toric compactification has genus $g=n-1$.  The logarithmic critical locus $S_n$ is smooth, the complete contour $\mathcal C_n$ is a smooth embedded real-analytic one-manifold, and
$$
\partial\A_n\subsetneq\mathcal C_n.
$$
The contour is the disjoint union of its boundary-producing noncompact branches and a compact oval contained in $\operatorname{Int}(\A_n)$.  Moreover, $C_n$ is not torically equivalent to a circle, an ellipse, or any smooth conic.  Consequently every prescribed genus $g\geq1$ occurs by taking $n=g+1$.
\end{theorem}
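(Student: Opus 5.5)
The plan is to transport all logarithmic information from the translated conic $C_{10}=V(G)$, $G(x,y)=(x-10)^2+(y-10)^2-1$, through the finite monomial covering $\pi_n(z,w)=(z^n,w)$. We have $F_n=G\circ\pi_n$, so $C_n=\pi_n^{-1}(V_G)$. The map $\pi_n$ is an unramified covering of $(\C^*)^2$ with deck group $\mu_n$ acting on $z$, and
\[
\Log\circ\pi_n=M_n\circ\Log,\qquad M_n(X,Y)=(nX,Y).
\]
Smoothness is immediate: $\pi_n$ is a local biholomorphism and $V_G$ is smooth. Since $d\pi_n$ and $M_n$ are invertible, a point $p\in C_n$ is critical for $\Log|_{C_n}$ exactly when $\pi_n(p)$ is critical for $\Log|_{V_G}$. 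Hence $S_n=\pi_n^{-1}(S_G)$, and $S_n$ is smooth because $S_G$ is and $\pi_n$ is a local diffeomorphism. The same identity gives $\A_n=M_n^{-1}(\A_G)$. Since $\pi_n|_{C_n}\colon C_n\to V_G$ is surjective, it also gives $\mathcal C_n=M_n^{-1}(\mathcal C_G)$, and boundary and interior correspond under the linear homeomorphism $M_n^{-1}$. Theorem~\ref{thm:circle} (with $R=10$) then yields the decomposition $\mathcal C_n=\partial\A_n\sqcup M_n^{-1}(\Gamma_{10})$, the compact oval lying in $\operatorname{Int}(\A_n)$. Smooth embeddedness of $\mathcal C_n$ follows because the image set is just a linear image of a smooth embedded one-manifold. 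Deck-related lifts have identical image germs, so they create no new branches; this settles the multiplicity issue raised in the introduction.

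For the algebraic statements, I first compute $\Newt(F_n)=\operatorname{conv}\{(0,0),(2n,0),(0,2)\}$. The edge polynomials are $z^{2n}-20z^n+199$, $w^2-20w+199$ and $z^{2n}+w^2$. Each has distinct roots in the torus: the discriminant of the first two is $400-796\neq0$ in the variable $z^n$ or $w$, and the third factors as $(z^n-iw)(z^n+iw)$. This gives Newton nondegeneracy. Khovanskii's formula then counts interior lattice points $(j,1)$ with $0<j<n$, giving $g=n-1$.

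For irreducibility, note that a factorization $F_n=PQ$ would give two compactified components in the smooth toric surface. These would meet, because their intersection number is a mixed volume of nonzero polygons, which is positive unless both polygons are parallel segments. That intersection would be a singular point of the smooth compactification, a contradiction. The parallel-segment case is excluded because the triangle is two-dimensional. This is the same argument as in Theorem~\ref{thm:genus-one-control}. Connectedness follows, since an irreducible smooth curve is a connected Riemann surface minus finitely many points.

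Non-equivalence to any smooth conic comes from genus: a smooth conic in the torus has genus $0$, while $g=n-1\ge1$, and a toric equivalence preserves genus.

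The main obstacle I expect is making sure no step silently uses injectivity of $\pi_n$. In particular, the boundary statement $\partial\A_n=M_n^{-1}(\partial\A_G)$ and the claim that the oval is internal must be read off from the set identity $\A_n=M_n^{-1}(\A_G)$ alone, not from a sheetwise argument. I would state this explicitly, and I would also check that nondegeneracy holds on the hypotenuse, where the binomial $z^{2n}+w^2$ has coefficient ratio away from zero.
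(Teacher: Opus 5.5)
Your proposal is correct, and its backbone matches the paper's proof. You transport everything through the unramified covering $\pi_n$ using $\Log\circ\pi_n=M_n\circ\Log$, which gives $\A_n=M_n^{-1}(\A_G)$, $S_n=\pi_n^{-1}(S_G)$ and $\mathcal C_n=M_n^{-1}(\mathcal C_G)$. You then invoke Theorem~\ref{thm:circle} at $R=10$, use Khovanskii's formula for the genus, and use genus for non-equivalence; the paper does the same.

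The genuine difference is irreducibility. The paper writes $(w-10)^2=-(z^n-9)(z^n-11)$. The right side has $2n$ simple roots, so it is not a square in $\C(z)$, and Gauss's lemma applies. This route is elementary and does not need nondegeneracy. It also exhibits $C_n$ as a double cover of the $z$-line branched at exactly $2n$ points, which independently confirms $g=n-1$.

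Your Minkowski-sum and mixed-area argument is more general, but it presupposes nondegeneracy and contains one inaccuracy: the toric surface of $\Delta_n$ is not smooth for $n\ge2$. At the vertex $(0,2)$ the primitive edge vectors $(0,-1)$ and $(n,-1)$ have determinant $n$, giving a cyclic quotient singularity. The argument survives because $\overline{C_n}$ misses that fixed point, since the coefficient of $w^2$ is $1$. Alternatively, pass to a toric resolution, whose exceptional curves $\overline{C_n}$ also misses. Either way, you should say this explicitly.

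On the contour side, you derive embeddedness, the boundary and the interiority of the oval purely from the set identities. This is cleaner than the paper's sheet-lifting and deck-orbit discussion. It also avoids having to decide which non-deck-related critical points may share a logarithmic image: complex-conjugate critical points on the remote folds of the real conic already do.

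Finally, your edge polynomials $z^{2n}-20z^n+199$ and $w^2-20w+199$ are the correct ones. The paper's $(z^n-9)(z^n-11)$ and $(w-9)(w-11)$ are the discriminant factors, not the face polynomials, though the simple-root conclusion holds either way.
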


\begin{proof}
Put $G(u,v)=(u-10)^2+(v-10)^2-1$ and consider the monomial map $\pi_n:(\C^*)^2\to(\C^*)^2$ defined by $\pi_n(z,w)=(z^n,w)$.  Then $F_n=G\circ\pi_n$.  The Jacobian determinant of $\pi_n$ equals $nz^{n-1}$ and never vanishes on $(\C^*)^2$.  Thus $\pi_n$ is a finite unramified covering of degree $n$.

Let $L_n:\R^2\to\R^2$ be $L_n(X,Y)=(nX,Y)$.  The equality $\Log\circ\pi_n=L_n\circ\Log$ gives
$$
\A_n=L_n^{-1}(\A_G),\qquad
\partial\A_n=L_n^{-1}(\partial\A_G).
$$
Differentiating this identity at $p\in C_n$ gives $d\Log_{\pi_n(p)}\circ d(\pi_n)_p=L_n\circ d\Log_p$.  Since both $d(\pi_n)_p$ and $L_n$ are invertible, $p$ is logarithmically critical precisely when $\pi_n(p)$ is logarithmically critical on $V_G$.  Consequently $S_n=\pi_n^{-1}(S_G)$ and $\mathcal C_n=L_n^{-1}(\mathcal C_G)$.  The contour of the translated circle is a smooth embedded set consisting of its boundary-producing branches and one compact internal oval.  The invertible map $L_n^{-1}$ preserves embeddedness, disjointness, real analyticity, and the distinction between boundary and interior.  It follows that $\mathcal C_n$ is smooth and embedded and that $\partial\A_n\subsetneq\mathcal C_n$.

The derivatives are $(F_n)_z=2nz^{n-1}(z^n-10)$ and $(F_n)_w=2(w-10)$.  If a torus point satisfied $F_n=(F_n)_z=(F_n)_w=0$, then $z^n=10$ and $w=10$, whereas $F_n=-1$ there.  Hence $C_n$ is smooth.

Writing $y=w-10$, the equation becomes $y^2=-(z^n-9)(z^n-11)$.  The polynomial on the right has $2n$ distinct zeros and is not a square in $\C(z)$.  The equation is therefore irreducible over $\C(z)$, and Gauss's lemma implies that $F_n$ is irreducible in the Laurent polynomial ring.  A smooth irreducible complex curve is connected.

The Newton polygon is $\Delta_n=\operatorname{conv}\{(0,0),(2n,0),(0,2)\}$.  The horizontal face polynomial $(z^n-9)(z^n-11)$ and the vertical face polynomial $(w-9)(w-11)$ have only simple roots.  The diagonal face polynomial is $z^{2n}+w^2$, whose partial derivatives cannot vanish simultaneously in the edge torus.  Therefore $F_n$ is Newton nondegenerate.

The area of $\Delta_n$ is $2n$.  Its boundary lattice count is $2n+4$, because the three edge lattice lengths are $2n$, $2$, and $\gcd(2n,2)=2$.  Pick's formula gives
$$
\#\bigl(\operatorname{Int}(\Delta_n)\cap\mathbb Z^2\bigr)
=2n-\frac{2n+4}{2}+1=n-1.
$$
The interior lattice points are exactly $(j,1)$ for $1\leq j\leq n-1$.  Newton nondegeneracy and the toric genus formula therefore give genus $n-1$ \cite{Khovanskii1978}.

The compact critical oval of $V_G$ is parametrized by $(u,v)=(10+\cos\theta,10+\sin\theta)$.  Pulling its logarithmic image back through $L_n$ gives
$$
\eta_n(\theta)=\left(\frac1n\log(10+\cos\theta),\log(10+\sin\theta)\right).
$$
The regular nonreal sheet that covers a neighborhood on both sides of the oval for $V_G$ also pulls back through the unramified map $\pi_n$.  Hence amoeba points occur on both sides of $\eta_n$, so $\eta_n(S^1)\subset\operatorname{Int}(\A_n)$.

The differential identity proves absence of ramification of $\Log|_{S_n}$.  If a nonzero tangent vector to $S_n$ were annihilated by $d\Log$, its image under $d\pi_n$ would be a nonzero tangent vector to $S_G$ annihilated by the logarithmic differential, contradicting the established immersion of the translated-circle critical locus.

For embeddedness, let $\zeta_n=e^{2\pi i/n}$.  The deck transformations are $\iota_k(z,w)=(\zeta_n^kz,w)$.  Deck-related critical points have the same logarithmic image germ because $\Log\circ\iota_k=\Log$; the corresponding deck graphs must be removed in the normalized two-lift system.  If two critical points not related by a deck transformation had the same logarithmic value, their images under $\pi_n$ would give two distinct critical lifts with the same logarithmic value on $V_G$, contradicting embeddedness of $\mathcal C_G$.  Thus no off-deck crossing or tangential coincidence occurs.  The simple face roots and the linear identity with $L_n$ also prevent collisions at the toric ends.  This gives uniform ramification, normalized two-lift, and toric-end certificates; the distinction between ramification and multiple critical lifts is discussed in \cite{NisseSing}.

Finally, toric automorphisms preserve the function field, normalization, and geometric genus.  Every $C_n$ has genus $n-1>0$, whereas a smooth irreducible conic has genus zero.  Hence $C_n$ is not torically equivalent to a circle, an ellipse, or any smooth conic.
\end{proof}

The covering produces $n$ source lifts of each critical component, but it does not produce $n$ distinct contour-image ovals.  All points in one deck orbit have the same logarithmic image.  The cases $n=2$, $n=3$, and $n=4$ are exactly the former genus-one, genus-two, and genus-three constructions, so they no longer require separate statements or proofs.

\subsection{A genus-four illustration of the uniform family}

For $n=5$, the curve $F_5(z,w)=z^{10}-20z^5+w^2-20w+199=0$ has genus four.  Figure~\ref{fig:family-amoeba} displays its amoeba in blue and its complete logarithmic contour in dark red.  The small red oval is internal, with amoeba points on both sides, and the noncompact red branches contain the boundary-producing arcs.

\begin{figure}[ht]
\centering
\includegraphics[width=.54\textwidth]{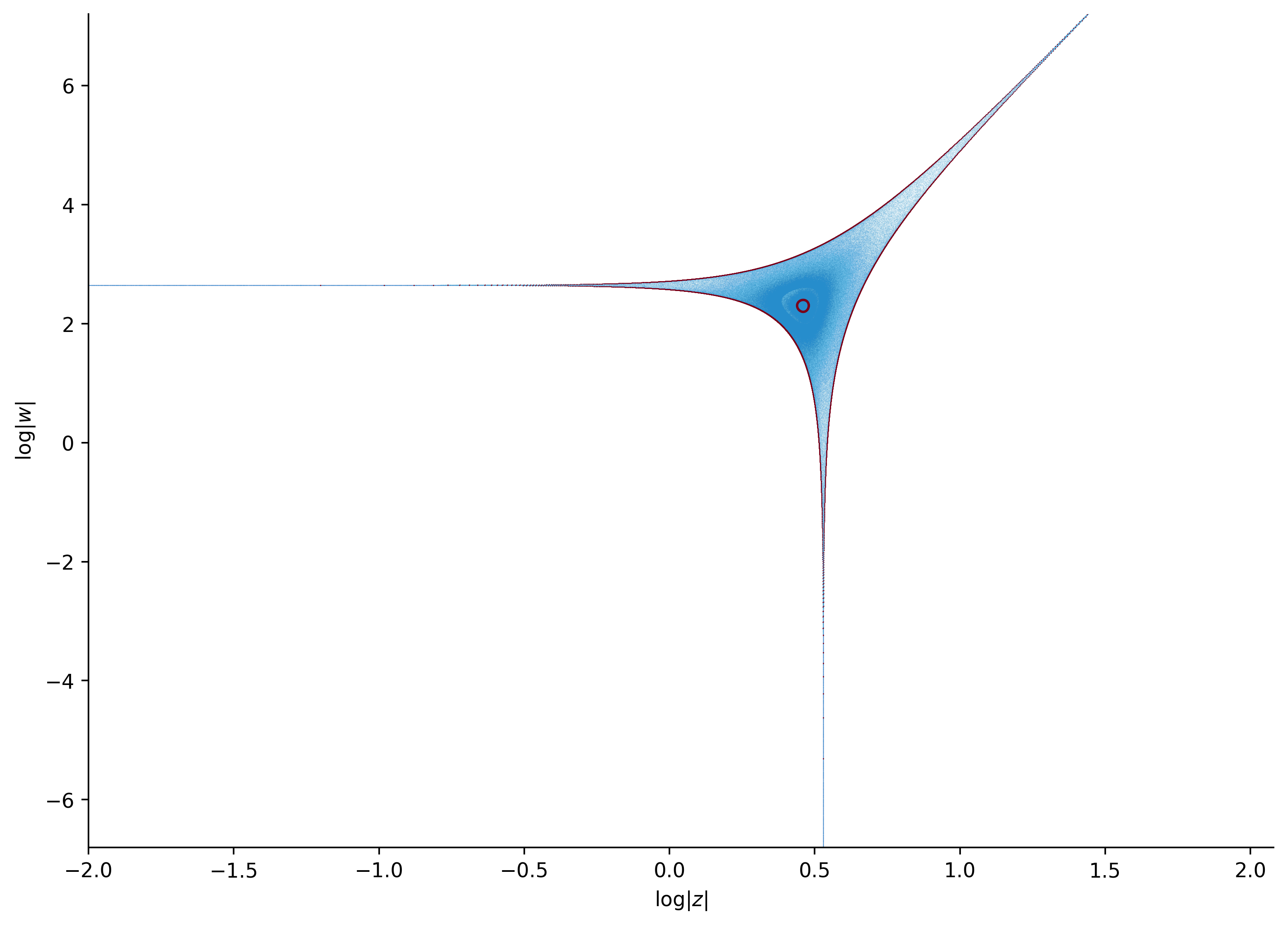}
\caption{The amoeba of $F_5$ in blue and its complete contour in dark red.  The compact oval is contained in the amoeba interior.}
\label{fig:family-amoeba}
\end{figure}

The Newton polygon is $\Delta_5=\operatorname{conv}\{(0,0),(10,0),(0,2)\}$.  Its four interior lattice points are $(1,1)$, $(2,1)$, $(3,1)$, and $(4,1)$, in agreement with the genus computation.

\begin{figure}[ht]
\centering
\includegraphics[width=.65\textwidth]{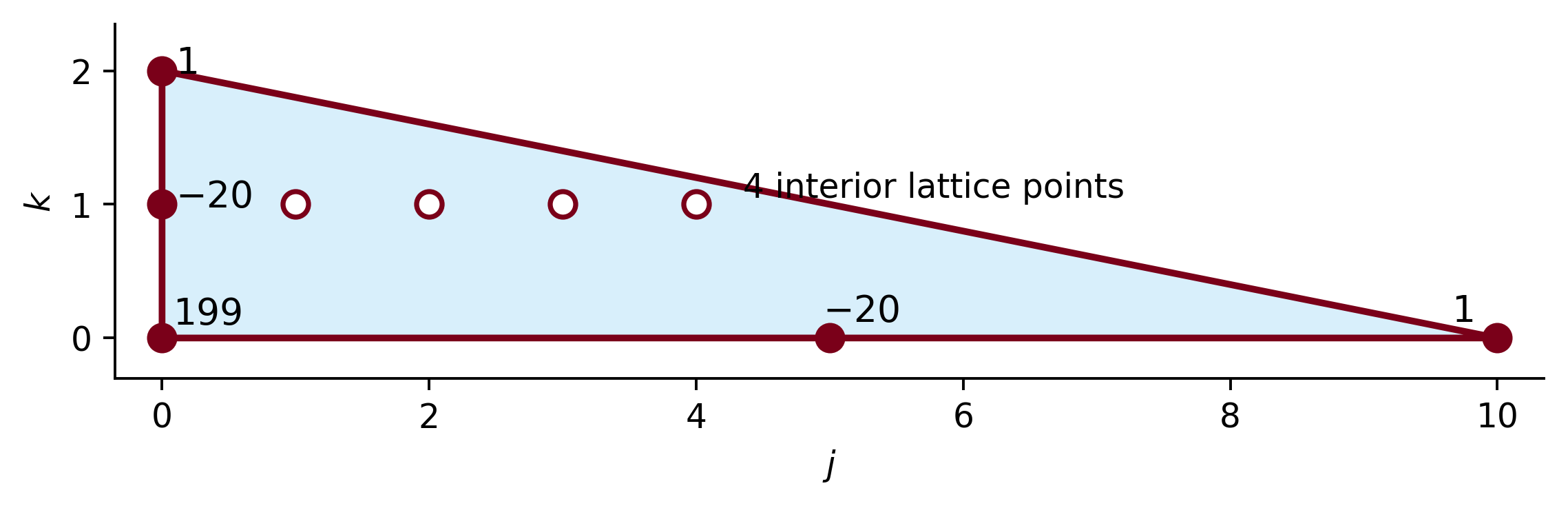}
\caption{The Newton polygon of $F_5$.  The four interior lattice points determine the genus.}
\label{fig:family-newton}
\end{figure}

We conclude that the quartic $z^2w^2-20zw+z^2-20z+199=0$ supplies a smooth connected irreducible plane curve with a smooth embedded logarithmic contour strictly larger than the amoeba boundary.  Unlike the circle and ellipse equations, it has total degree four and Newton polygon $\operatorname{conv}\{(0,0),(2,0),(2,2)\}$.  Its compact red oval is an internal critical-value component, not the boundary of a bounded complement component.  The uniform family $F_n(z,w)=(z^n-10)^2+(w-10)^2-1$ extends the same phenomenon to every positive genus.  It replaces the separate genus-one, genus-two, and genus-three constructions and identifies them as the first three members of one finite-covering theorem.


\end{document}